\ifpdf\usepackage[pdftex]{hyperref}
\else\usepackage[hypertex]{hyperref}\fi
\theoremstyle{plain}
\newtheorem{thm}{Theorem}[section]
\newtheorem{prop}[thm]{Proposition}
\newtheorem{cor}[thm]{Corollary}
\theoremstyle{definition}
\theoremstyle{remark}
\newtheorem{rem}[thm]{Remark}
\theoremstyle{remark}
\DeclareMathOperator{\tr}{Tr}
\DeclareMathOperator{\im}{Im}
\let\dsp=\displaystyle 
\def\R{\mathbb R}
\def\N{\mathbb N}
\begin{document}

\title[Sobolev inequalities and spectral density]{Around a
  Sobolev--Orlicz inequality\\
  for operators of given spectral density}

\author{Michel Rumin}
\address{Laboratoire de Mathématiques d'Orsay\\
  CNRS et Université Paris Sud\\
  91405 Orsay Cedex\\ France}

\email{michel.rumin@math.u-psud.fr}

\date{\today}

\begin{abstract}
  We prove some general Sobolev--Orlicz, Nash and Faber--Krahn
  inequalities for positive operators $A$ of given ultracontractive
  spectral decay $F(\lambda)= \|\chi_A(]0,
  \lambda])\|_{1,\infty}$. For invariant operators on coverings of
  finite simplicial complexes this function is equivalent to von
  Neumann spectral density. This allows in the polynomial decay case
  to relate the Novikov--Shubin numbers to Sobolev inequalities on
  exact $\ell^2$-cochains, and to the vanishing of the torsion of the
  $\ell^{p,2}$-cohomology for some $p\geq 2$.
\end{abstract}

\keywords{Sobolev--Orlicz inequality, spectral distribution,
  Faber--Krahn inequality, Novikov--Shubin invariants,
  $\ell^{p,q}$-cohomology}

\subjclass[2000]{58J50, 46E35, 58J35, 46E30, 35P20.}

\thanks{Author supported in part by the French ANR-06-BLAN60154-01
  grant.}

\maketitle


\section{Introduction and main results}
\label{sec:introduction}

Let $A$ be a strictly positive self-adjoint operator on a measure
space $(X, \mu)$. Suppose moreover that the semigroup $e^{-tA}$ is
equicontinuous on $L^1(X)$. Then, according to Varopoulos
\cite{Varopoulos,Coulhon2}, a polynomial heat decay
\begin{displaymath}
  \|e^{-tA}\|_{1,\infty} \leq C t^{-\alpha/2} \quad \mathrm{with} \quad
  \alpha > 2 \,,
\end{displaymath}
is equivalent to the Sobolev inequality
\begin{equation}
  \label{eq:1}
  \|f \|_p \leq C'
  \|A^{1/2}f\|_2 \quad \mathrm{for} \quad
  1/p = 1/2 - 1/\alpha.  
\end{equation}
This result applies in particular in the case $A$ is the Laplacian
acting \emph{on scalar functions} of a complete manifold, either in
the smooth or discrete graph setting.

\smallskip

The first purpose of this paper is to present short proofs of general
Sobolev--Orlicz inequalities that hold for positive self-adjoint
operators, without equicontinuity or polynomial decay assumption,
knowing either their heat decay, as previously, or their
``ultracontractive spectral decay'' $F(\lambda) =
\|\Pi_\lambda\|_{1,\infty}$ of their spectral projectors $\Pi_\lambda
= \chi_A(]0, \lambda])$ on $E_\lambda$. As will be seen in
Sections~\ref{sec:ultr-norms-gamma} and \ref{sec:spectr-dens-cohom},
the interest for this former $F(\lambda)$ mostly comes from geometric
considerations. For instance if $A$ is a scalar invariant operator
over a discrete group $\Gamma$, or more generally an unimodular one,
then $F(\lambda) $ coincides with von Neumann's $\Gamma$-dimension of
$E_\lambda$, and thus $F$ represents the spectral density function of
$A$, see Proposition~\ref{prop:trace-norm}. In the general setting the
spectral decay $F$ stays a right continuous increasing function as
comes from the identity
\begin{equation}
  \label{eq:2}
  \|P^*P\|_{1, \infty} = \|P\|_{1, 2}^2 = \sup_{\|f\|_1,
    \|g\|_1 \leq 1}| \langle Pf, Pg \rangle |.
\end{equation}
We state the Sobolev--Orlicz inequalities we shall prove. In the
sequel, if $\varphi$ is a monotonic function, $\varphi^{-1}$ will
denote its right continuous inverse.
\begin{thm}
  \label{thm:1.1}
  Let $A$ be a positive self-adjoint operator on $(X, \mu)$ with
  ultracontractive spectral projections $\Pi_\lambda =
  \chi_A(]0,\lambda])$, i.e. $F(\lambda) = \|\Pi_\lambda\|_{1,\infty}
  < +\infty$.

  Suppose moreover that the Stieljes integral $\dsp G(\lambda) =
  \int_0^\lambda \frac{dF(u)}{u}$ converges.  Then any non zero $f \in
  L^2(X) \cap (\ker A)^\bot$ of finite energy $\mathcal{E}(f) =
  \langle Af, f\rangle_2$ satisfies
  \begin{equation}
    \label{eq:3}
    \int_X H \Bigl( \frac{|f(x)|^2}{4 \mathcal{E}(f)} \Bigr) d \mu \leq 1 \,, 
  \end{equation}
  where $H(y) = y \,G^{-1}(y)$.
\end{thm}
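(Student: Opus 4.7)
The plan is to combine a pointwise $L^\infty$ estimate on each spectral truncation $\Pi_\lambda f$ with a Fubini identity on the spectral side, integrated against $d\lambda$.

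First I would establish the pointwise bound
\[
|\Pi_\lambda f(x)|^2 \leq G(\lambda)\,\mathcal{E}(f) \quad \text{for all } \lambda>0 \text{ and a.e. } x\in X.
\]
Write $g = A^{1/2} f$, so that $\|g\|_2^2 = \mathcal{E}(f)$ and $\Pi_\lambda f = A^{-1/2}\Pi_\lambda g$ on $(\ker A)^\bot$. Identity~(\ref{eq:2}) applied to $P = A^{-1/2}\Pi_\lambda$ reduces the $L^2$-to-$L^\infty$ bound for $P$ to the $L^1$-to-$L^\infty$ estimate $\|A^{-1}\Pi_\lambda\|_{1,\infty} \leq G(\lambda)$. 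Since $A^{-1}\Pi_\lambda$ is positive self-adjoint, its $L^1$-to-$L^\infty$ norm equals $\sup_x (A^{-1}\Pi_\lambda)(x,x) = \sup_x \int_0^\lambda u^{-1}\,dK_u(x,x)$, where $K_u(x,x)$ is the diagonal kernel of $\Pi_u$ and satisfies $0 \leq K_u(x,x)\leq F(u)$. A Stieltjes integration by parts -- whose boundary term at $0$ vanishes since convergence of $G$ forces $F(u)/u \to 0$ -- then upgrades this pointwise comparison into
\[
\int_0^\lambda u^{-1}\,dK_u(x,x) = \lambda^{-1} K_\lambda(x,x) + \int_0^\lambda u^{-2} K_u(x,x)\,du \leq G(\lambda).
\]

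Next, I would introduce the super-level set $\Omega(\lambda) := \{x\in X : |f(x)|^2 > 4\mathcal{E}(f)\,G(\lambda)\}$. On $\Omega(\lambda)$ the pointwise bound gives $|\Pi_\lambda f(x)|\leq |f(x)|/2$, so the triangle inequality forces $|(I-\Pi_\lambda)f(x)| \geq |f(x)|/2$ there, whence $\int_{\Omega(\lambda)} |f|^2\,d\mu \leq 4\,\|(I-\Pi_\lambda) f\|_2^2$. Integrating in $\lambda$ over $(0,\infty)$ and applying Fubini, the right-hand side collapses via the spectral theorem to
\[
4\int_0^\infty\!\!\int_\lambda^\infty d\|E_u f\|_2^2\,d\lambda = 4\int_0^\infty u\,d\|E_u f\|_2^2 = 4\mathcal{E}(f),
\]
while the left-hand side, using that $x \in \Omega(\lambda)$ iff $\lambda < G^{-1}(|f(x)|^2/(4\mathcal{E}(f)))$, equals
\[
\int_X |f|^2\, G^{-1}\bigl(|f|^2/(4\mathcal{E}(f))\bigr)\,d\mu = 4\mathcal{E}(f)\int_X H\bigl(|f|^2/(4\mathcal{E}(f))\bigr)\,d\mu.
\]
Dividing by $4\mathcal{E}(f)$ yields (\ref{eq:3}).

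The main subtlety I anticipate is conceptual: applying the pointwise bound at a single optimal $\lambda=\lambda(t)$ and invoking Chebyshev only yields the weak-type estimate $\mu(\{|f|>t\}) \leq 1/H(t^2/(4\mathcal{E}(f)))$, whose layer-cake integration against $dH$ is logarithmically divergent. The strong Orlicz form is recovered only by integrating the distributional inequality against $d\lambda$ and exploiting the spectral-side Fubini identity $\int_0^\infty \|(I-\Pi_\lambda)f\|_2^2\,d\lambda = \mathcal{E}(f)$, which is invisible from the weak-type reformulation. The remaining technical points -- the unbounded operator $A^{-1/2}$ on $(\ker A)^\bot$ and the Stieltjes integration by parts -- should be routine.
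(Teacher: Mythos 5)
Your proposal follows essentially the same route as the paper: bound $\|A^{-1}\Pi_\lambda\|_{1,\infty}$ by $G(\lambda)$ via Stieltjes integration by parts (with $F(\varepsilon)/\varepsilon\to 0$ killing the boundary term), convert via \eqref{eq:2} into the pointwise estimate $\|\Pi_\lambda f\|_\infty^2\le G(\lambda)\mathcal{E}(f)$, deduce $|f|^2\le 4|\Pi_{>\lambda}f|^2$ on the superlevel set, and integrate in $\lambda$ using $\int_0^\infty\|\Pi_{>\lambda}f\|_2^2\,d\lambda=\mathcal{E}(f)$. The only variation is that you run the integration-by-parts argument at the level of the diagonal kernel $K_u(x,x)$ rather than directly on the operator norms as in Proposition~\ref{prop:2.1}; the paper's version, working with the truncations $A^{-1}(\Pi_\lambda-\Pi_\varepsilon)$ and passing to the limit by Beppo--Levi via \eqref{eq:2}, sidesteps having to justify a priori that $A^{-1}\Pi_\lambda$ admits a pointwise kernel and that $u\mapsto K_u(x,x)$ can be integrated by parts for a.e.\ $x$. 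One further small point: defining $\Omega(\lambda)$ with strict inequality and claiming $x\in\Omega(\lambda)\iff\lambda<G^{-1}(\cdot)$ can lose measure where $G$ has flat intervals (the ``iff'' fails there); using $\ge$ as the paper does gives $|\{\lambda:G(\lambda)\le y\}|=G^{-1}(y)$ exactly for the right-continuous inverse.
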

The heat version of this result has a similar statement (and proof).
\begin{thm}
  \label{thm:1.2}
  Let $A$ be a positive self-adjoint operator on $(X, \mu)$ such that
  $L(t) = \|e^{-tA} \Pi_V\|_{1,\infty}$ is finite, with $V= L^2(X)\cap
  (\ker A)^\bot$.
  
  Suppose moreover that $\dsp M(t) = \int_t^{+\infty} L(u) du < +
  \infty $. Then any non zero $f \in V$ of finite energy satisfies
  \begin{equation}
    \label{eq:4}
    \int_X N \Bigl( \frac{|f(x)|^2}{4 \mathcal{E}(f)} \Bigr) d \mu
    \leq \ln 2 \,,
  \end{equation}
  where $N(y) = y / M^{-1}(y)$
\end{thm}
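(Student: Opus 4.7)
The argument should parallel that of Theorem~\ref{thm:1.1}, with the spectral partition $\{\Pi_\lambda\}$ replaced by the time-decomposition coming from the heat semigroup $\{e^{-tA}\}$. At a formal level, $L$ is the Laplace transform of $dF$ and $M$ is the Laplace transform of $dG$, so the statements of Theorems~\ref{thm:1.1} and~\ref{thm:1.2} are Laplace duals of each other. The key analytic input is once more identity~\eqref{eq:2}: since $(e^{-tA/2}\Pi_V)^*(e^{-tA/2}\Pi_V) = e^{-tA}\Pi_V$ has $\|\,\cdot\,\|_{1,\infty} = L(t)$, one gets $\|e^{-tA/2}\Pi_V\|_{2,\infty} = L(t)^{1/2}$, hence $\|e^{-tA/2}g\|_\infty \leq L(t)^{1/2}\|g\|_2$ for every $g \in V$.

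For $f \in V$ with $\mathcal{E}(f) < \infty$, the plan is to split, at an $x$-dependent cutoff time $\tau = \tau(x) > 0$ to be chosen,
\[
f = e^{-\tau A}f + (I - e^{-\tau A})f.
\]
The ``high-frequency'' piece is controlled in $L^2$-norm by the energy via the elementary spectral inequality $(1-e^{-\tau u})^2 \leq \tau u$, yielding $\|(I-e^{-\tau A})f\|_2^2 \leq \tau\,\mathcal{E}(f)$. The ``low-frequency'' piece $e^{-\tau A}f = e^{-\tau A/2}(e^{-\tau A/2}f)$ is controlled pointwise using the $L^2 \to L^\infty$ bound above: $|e^{-\tau A}f(x)| \leq L(\tau)^{1/2}\|e^{-\tau A/2}f\|_2$.

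To reach the Orlicz form, I would use a continuous version of this split: starting from $f = \int_0^\infty A e^{-tA}f\,dt$ (valid in $V$) and applying the \eqref{eq:2}-type duality at each $t$, the pointwise quantity $|f(x)|^2$ is bounded by a Laplace-type integral against $L(t)$ plus an energy remainder. Calibrating the threshold as $\tau(x) = M^{-1}(|f(x)|^2/(4\mathcal{E}(f)))$ is dictated by matching the pointwise $L(\tau)^{1/2}$-bound against the $\tau\,\mathcal{E}$ bound; after Fubini between the $t$- and $x$-integrations and using the defining identity $M(t) = \int_t^\infty L(u)\,du$, the function $N(y) = y/M^{-1}(y)$ emerges naturally. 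The factor $\ln 2$ comes from a dyadic time integral of the form $\int_\tau^{2\tau} du/u$ that appears when passing between pointwise and energy regimes.

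The main technical obstacle is the precise calibration of the $x$-dependent threshold $\tau(x)$ so that the pointwise $L^\infty$-bound on the low-frequency part matches the $L^2$-energy bound on the high-frequency part in exactly the Orlicz sense encoded by $N$. Obtaining the sharp constant $\ln 2$ (rather than a larger universal constant) seems to require the continuous rather than dyadic heat decomposition, together with a careful Fubini exchange between the spatial and temporal integrations. Once this bookkeeping is set up, the spectral estimates themselves are routine, and the proof proceeds in complete formal analogy with that of Theorem~\ref{thm:1.1}.
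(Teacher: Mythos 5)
Your high-level blueprint (time decomposition via the heat semigroup, a level-set estimate, and a Fubini exchange) is the right one, but the ``key analytic input'' you identify is not the one that drives the theorem, and this is a genuine gap. You propose $\|e^{-tA/2}g\|_\infty \leq L(t)^{1/2}\|g\|_2$, obtained from $\|e^{-tA}\Pi_V\|_{1,\infty} = L(t)$ and \eqref{eq:2}. This controls the low-frequency piece in terms of $L(t)$ and $\|g\|_2$, but the theorem is stated in terms of $M$ and $\mathcal{E}(f)$, and there is no obvious passage from one to the other. What the paper actually establishes first, in Proposition~\ref{prop:2.1}, is $\|A^{-1}e^{-tA}\Pi_V\|_{1,\infty}\leq M(t)$, coming from $A^{-1}e^{-tA}\Pi_V = \int_t^\infty e^{-sA}\Pi_V\,ds$; applying \eqref{eq:2} to $P = A^{-1/2}e^{-tA/2}\Pi_V$ then yields the needed pointwise bound
\begin{displaymath}
\|e^{-tA/2}f\|_\infty^2 = \|A^{-1/2}e^{-tA/2}(A^{1/2}f)\|_\infty^2 \leq M(t)\,\mathcal{E}(f) \quad \text{for } f\in V\,.
\end{displaymath}
The decisive step you skip is pre-composing by $A^{-1/2}$: this is exactly what replaces $L$ by $M$ and $\|f\|_2^2$ by $\mathcal{E}(f)$, and without it the Orlicz function $N(y)=y/M^{-1}(y)$ does not emerge.

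Two further points where your sketch drifts from what is needed. First, the inequality above holds uniformly in $x$, so one does not need an $x$-dependent threshold $\tau(x)$ nor the elementary bound $(1-e^{-\tau u})^2\leq\tau u$; one simply reads off, as in Theorem~\ref{thm:1.1}, that $|f(x)|^2\leq 4|(1-e^{-tA/2})f(x)|^2$ on the level set $\{\,|f(x)|^2\geq 4M(t)\mathcal{E}(f)\,\}$, integrates in $x$, and then integrates in $t$ against $dt/t^2$. Second, the constant $\ln 2$ is not a dyadic artifact of an integral $\int_\tau^{2\tau}du/u$; it comes from the exact spectral computation
\begin{displaymath}
\int_0^\infty \|(1-e^{-tA/2})f\|_2^2\,\frac{dt}{t^2} = \Bigl(\tfrac{1}{2}\int_0^\infty\frac{(1-e^{-u})^2}{u^2}\,du\Bigr)\mathcal{E}(f) = (\ln 2)\,\mathcal{E}(f)\,,
\end{displaymath}
after which the Fubini exchange on the left turns the $t$-integral over $\{t: 4M(t)\mathcal{E}(f)\leq |f(x)|^2\}$ into $1/M^{-1}\bigl(|f(x)|^2/4\mathcal{E}(f)\bigr)$ by the monotonicity of $M$. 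The identity $f=\int_0^\infty Ae^{-tA}f\,dt$ plays no role.
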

Both results give (effective) Sobolev inequalities \eqref{eq:1} in the
polynomial decay case for $F$ or $L$. At first, one sees easily that
the transform from $F$ to $G$ is increasing, see \eqref{eq:13}, while
$G$ to $H$ is decreasing. Therefore, if $F(\lambda) \leq C
\lambda^\alpha$ for $\alpha > 1$, then $G(\lambda) \leq C_1
\lambda^{\alpha-1}$ with $C_1 = \frac{C \alpha }{\alpha-1}$, and $H(y)
\geq C_1^{\frac{1}{1-\alpha}} y^{\frac{\alpha}{\alpha-1}}$. Hence
(\ref{eq:3}) reads $\|f \|_{2\alpha/(\alpha-1)} \leq 2
C_1^{\frac{1}{2\alpha}} \|A^{1/2}f\|_2$.

\smallskip

Under convexity assumptions, $H$ and $N$-Sobolev inequalities
\eqref{eq:3} and \eqref{eq:4} imply some general Nash and Faber--Krahn
inequalities, see \eqref{eq:18} and~\eqref{eq:19}. This approach
assumes some thinness of the near-zero spectrum, as required by the
convergence of $G$ or $M$.  Since the classical Nash inequality makes
sense for thick spectrum, one may look for a direct proof. From heat
decay to Nash, such a derivation has already been obtained for general
operators by Coulhon, see \cite{Coulhon2} and the survey
\cite{Coulhon3}. Therefore we will focus here on the relationship
between the spectral density $F$ and Nash. This states as follows.
\begin{thm}
  \label{thm:1.3} Let $A$ be a positive self-adjoint operator, with
  finite $F (\lambda) = \|\Pi_\lambda\|_{1, \infty} $, and a non zero
  $f \in V= L^2(X)\cap (\ker A)^\bot$.
  
  $\bullet$ Then it holds that
  \begin{equation}
    \label{eq:5}
    \int_X |f(x)|^2 F^{-1}\Bigl( \frac{|f(x)|}{2 \|f\|_1}\Bigr) d \mu\leq 4
    \mathcal{E}(f)\,. 
  \end{equation}
  $\bullet$ If $\varphi$ is a convex function such that $0 \leq
  \varphi(y) \leq y F^{-1}(y)$, then the Nash--type inequality holds
  \begin{equation}
    \label{eq:6}
    \|f\|_1^2 \/ \varphi \Bigl(\frac{ \|f\|_2^2}{2 \|f\|_1^2}\Bigr) \leq 2
    \mathcal{E}(f) \,.
  \end{equation}
  $\bullet$ In particular if $f$ and $Af$ are supported in a domain
  $\Omega$ of finite measure, the Faber--Krahn type inequality is
  satisfied
  \begin{equation}
    \label{eq:7}
    \mu (\Omega) \varphi\Bigl(\frac{1}{2 \mu(\Omega)}\Bigr) \leq \frac{2
      \mathcal{E}(f)}{\|f\|_2^2} \,.
  \end{equation}
\end{thm}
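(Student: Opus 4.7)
The plan is to establish the three bullets in order, with \eqref{eq:5} doing the heavy lifting and \eqref{eq:6}, \eqref{eq:7} extracted from it. Throughout I decompose $f = f_\lambda + f^\lambda$ with $f_\lambda = \Pi_\lambda f$; two elementary facts drive everything: the $L^1 \!\to\! L^\infty$ bound $\|f_\lambda\|_\infty \leq F(\lambda)\|f\|_1$ read off from the definition of $F$, and the Fubini identity
\begin{equation*}
  \mathcal{E}(f) \;=\; \int_0^\infty \lambda\, d\|f_\lambda\|_2^2 \;=\; \int_0^\infty \|f^\lambda\|_2^2\, d\lambda\,.
\end{equation*}

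For \eqref{eq:5}, I fix $\lambda$ and observe that on $E_\lambda := \{x : |f(x)| > 2F(\lambda)\|f\|_1\}$ the high-frequency part must carry at least half the pointwise mass: $|f^\lambda(x)| \geq |f(x)|/2$. Integrating $\|f^\lambda\|_2^2 \geq \tfrac{1}{4}\int_{E_\lambda} |f|^2\, d\mu$ over $\lambda$ and swapping the order of integration yields
\begin{equation*}
  \mathcal{E}(f) \;\geq\; \tfrac{1}{4}\int_X |f(x)|^2 \, \bigl|\{\lambda : F(\lambda) < |f(x)|/(2\|f\|_1)\}\bigr|\, d\mu\,.
\end{equation*}
The Lebesgue length of the bracketed set is $F^{-1}\bigl(|f(x)|/(2\|f\|_1)\bigr)$ by the very definition of the right-continuous inverse, and \eqref{eq:5} follows.

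For \eqref{eq:6}, rewrite the integrand of \eqref{eq:5} as $|f|^2 F^{-1}(y) = 2\|f\|_1 |f|\,\psi(y)$ where $y = |f|/(2\|f\|_1)$ and $\psi(y) = yF^{-1}(y) \geq \varphi(y)$. The inequality then reads $\int |f|\,\varphi\bigl(|f|/(2\|f\|_1)\bigr)\, d\mu \leq 2\mathcal{E}(f)/\|f\|_1$. Applying Jensen's inequality to the convex $\varphi$ against the probability measure $|f|\,d\mu/\|f\|_1$ (whose barycentre of $|f|/(2\|f\|_1)$ is $\|f\|_2^2/(2\|f\|_1^2)$) pulls $\varphi$ outside the integral, and \eqref{eq:6} drops out after multiplying through by $\|f\|_1$.

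For \eqref{eq:7}, the hypothesis $0 \leq \varphi(y) \leq yF^{-1}(y)$ forces $\varphi(0)=0$, and then convexity makes $y \mapsto \varphi(y)/y$ non-decreasing on $(0,\infty)$. Cauchy--Schwarz on $\Omega$ gives $\|f\|_1^2 \leq \mu(\Omega)\|f\|_2^2$, so $u := \|f\|_2^2/(2\|f\|_1^2) \geq v := 1/(2\mu(\Omega))$. Dividing \eqref{eq:6} by $\|f\|_2^2$ recasts it as $\varphi(u)/(2u) \leq 2\mathcal{E}(f)/\|f\|_2^2$, and the monotonicity of $\varphi(y)/y$ replaces $u$ by the smaller $v$, yielding $\mu(\Omega)\,\varphi\bigl(1/(2\mu(\Omega))\bigr) = \varphi(v)/(2v) \leq 2\mathcal{E}(f)/\|f\|_2^2$, i.e.\ \eqref{eq:7}. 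The only delicate point is in step~1: carefully identifying $|\{\lambda : F(\lambda) < y\}|$ with $F^{-1}(y)$ when $F$ has flat pieces and jumps (both typical for spectral projectors); steps 2 and 3 are then straightforward manipulations.
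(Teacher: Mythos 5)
Your route is the same as the paper's: the ultracontractive bound $\|\Pi_\lambda f\|_\infty \leq F(\lambda)\|f\|_1$ forces the tail $f^\lambda=\Pi_{>\lambda}f$ to carry at least half of $|f(x)|$ on the relevant level set, then two integrations (in $x$, then in $\lambda$) and a Tonelli swap produce $F^{-1}$, and Jensen against the probability measure $|f|\,d\mu/\|f\|_1$ gives \eqref{eq:6}. There is, however, a genuine slip in your derivation of \eqref{eq:5}, located exactly in the spot you flag at the end as ``delicate'' without actually resolving it. Because you define $E_\lambda$ with a \emph{strict} inequality, the Tonelli swap produces the Lebesgue measure of $\{\lambda : F(\lambda) < y\}$, which equals the \emph{left}-continuous generalized inverse $\inf\{\lambda : F(\lambda)\geq y\}$, not the right-continuous one $F^{-1}(y)=\sup\{\lambda : F(\lambda)\leq y\}$ that the paper fixes by convention. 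The two differ precisely where $F$ has a plateau at level $y$ (a spectral gap of $A$, a case the paper explicitly allows), and there the left inverse is strictly smaller; since $y(x)=|f(x)|/(2\|f\|_1)$ can land on such a level on a set of positive $\mu$-measure, your argument as written proves something weaker than \eqref{eq:5}. The fix costs one character: set $E_\lambda=\{x:|f(x)|\geq 2F(\lambda)\|f\|_1\}$. The pointwise step is unchanged, since $|f_\lambda(x)|\leq F(\lambda)\|f\|_1\leq |f(x)|/2$ still yields $|f^\lambda(x)|\geq |f(x)|/2$, and now the inner measure is $|\{\lambda : F(\lambda)\leq y\}| = F^{-1}(y)$ as required.

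Your Jensen step for \eqref{eq:6} matches the paper's exactly. Your derivation of \eqref{eq:7} --- observing that $0\leq\varphi\leq yF^{-1}(y)$ forces $\varphi(0)=0$, hence $y\mapsto\varphi(y)/y$ is non-decreasing by convexity, then Cauchy--Schwarz over $\Omega$ to compare $\|f\|_2^2/(2\|f\|_1^2)$ with $1/(2\mu(\Omega))$ --- is correct and usefully spells out a passage the paper asserts without proof.
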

As an illustration, if one can take above $\varphi (y) \geq C y
F^{-1}(y)$ for some constant $C$, for instance $\varphi (y)=
yF^{-1}(y) $ if convex itself, then \eqref{eq:7} shows that a non zero
state $f \in V$ of energy $\mathcal{E}(f) \leq \lambda \|f\|_2^2$ and
support $\Omega$ satisfies the simple uncertainty principle
\begin{equation}
  \label{eq:8}
  2 \mu(\Omega) F(4\lambda/C )  \geq 1\,.
\end{equation}
On groups this fits well with the interpretation of $F(\lambda)$ as a
renormalised ``density'' of dimension of $E_\lambda$ per volume. More
concretely, for any invariant positive scalar operator on a finite
group $\Gamma$, one has by Proposition~\ref{prop:trace-norm} that
$F(\lambda) = \dim E_\lambda / \mathrm{card} (\Gamma)$, and thus
\eqref{eq:8} reads
\begin{displaymath}
  2 \dim E_{4\lambda / C} \geq \mathrm{card} (\Gamma) /
  \mathrm{card} (\Omega) \,.
\end{displaymath}
Except for the multiplicative constants $2$ and $4/C$ this formula is
quite sharp in general. Indeed it could happen in some case that
$\Gamma$ be tiled by $N = \mathrm{card} (\Gamma) / \mathrm{card}
(\Omega)$ copies of such domains $\Omega$, implying by min-max
principle that $\dim E_\lambda \geq N$ there.

\medskip

The Sobolev-like inequalities in Theorem~\ref{thm:1.1} and
\ref{thm:1.2} are not restricted to scalar functions and apply in
particular to the following setting. Let $K$ be a finite simplicial
complex and $X \rightarrow K = X / \Gamma$ some covering. One
considers on $X$ the complex of $\ell^2$ $k$-cochains with the
discrete coboundary
\begin{displaymath}
  d_k : \ell^2 X^k \rightarrow \ell^2 X^{k+1} 
\end{displaymath}
dual to the usual boundary $\partial$ of simplexes, see e.g. \cite[\S
3]{Pansu1}.

Its ($\ell^2$) cohomology $H_2^{k+1} = \ker d_{k+1} / \im d_k$ splits
in two components :
\begin{itemize}
\item the reduced part $\overline H_2^{k+1} = \ker d_{k+1} /
  \overline{ \im d_k}$, isomorphic to $\ell^2$-harmonic cochains
  $\mathcal{H}_2^{k+1} = \ker d_{k+1} \cap \ker d_k^*$,
\item and the torsion $T_2^{k+1} = \overline{ \im d_k} / \im d_k$.
\end{itemize}
Although this torsion is not a normed space, one can study it by
``measuring'' the unboundedness of $d_k^{-1} $ on $\im d_k$.
We will consider here two different means.

\smallskip - A first one is inspired by $\ell^{p,q}$-cohomology. One
enlarges the space $\ell^2 X^k $ to $\ell^p X^k$ for $p \geq 2$, and
asks whether, for $p$ large enough, one has
\begin{equation}
  \label{eq:9}
  \overline{d_k
    (\ell^2 X^k)}^{\ell^2}  \subset d_k(\ell^p X^k  )\,, 
\end{equation} 
This is satisfied in case the following Sobolev identity holds
\begin{equation}
  \label{eq:10}
  \exists C \quad \mathrm{such\ that }\quad  \|\alpha\|_p \leq C \|d_k
  \alpha \|_2 \quad \mathrm{for\ all}\ \alpha 
  \in (\ker d_k)^\bot  \subset  \ell^2\,. 
\end{equation}
The geometric interest of the rougher formulation \eqref{eq:9} lies in
its stability under the change of $X$ into other bounded homotopy
equivalent spaces, as stated in
Proposition~\ref{prop:invariance-torsion}. Moreover if $\overline
H_2^{k+1}(X)$ vanishes, then \eqref{eq:9} is equivalent to the
vanishing of the torsion of the $\ell^{p,2}$-cohomology of $X$, as
will be seen in Section~\ref{sec:spectr-dens-cohom}.

\smallskip - The second approach is spectral and relies on von Neumann
$\Gamma$-dimension. Consider the $\Gamma$-invariant self-adjoint $A =
d_k^* d_k $ acting on $(\ker d_k)^\bot$ and the spectral density
$F_{\Gamma,k}(\lambda)= \dim_\Gamma E_\lambda$ of its spectral spaces
$E_\lambda$. This function vanishes near zero if and only if zero is
isolated in the spectrum of $A$, which is equivalent to the vanishing
of the torsion $T_2^{k+1}$. The asymptotic behaviour of
$F_{\Gamma,k}(\lambda)$ when $\lambda \searrow 0$ has a geometric
interest in general since, given $\Gamma$, it is an homotopy invariant
of the quotient space $K$, as shown by Efremov, Gromov and Shubin in
\cite{Efremov,Gromov-Shubin,Gromov-Shubin-erratum}.

One can compare these two notions in the spirit of Varopoulos result
\eqref{eq:1} on functions. In the case of polynomial decay one
obtains.
\begin{thm}
  \label{thm:1.4} Let $K$ be a finite simplicial space and $ X
  \rightarrow K = X / \Gamma$ a covering. Let $F_{\Gamma,k}(\lambda)=
  \dim_\Gamma E_\lambda $ denotes the spectral density function of $A
  = d_k^* d_k$ on $(\ker d_k)^\bot$.

  If $F_{\Gamma,k}(\lambda) \leq C \lambda^{\alpha/2}$ for some
  $\alpha > 2 $, then the Sobolev inequality \eqref{eq:10}, and the
  inclusion \eqref{eq:9}, hold for $1/p \leq 1/2 - 1/\alpha$.
\end{thm}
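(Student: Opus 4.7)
The plan is to apply Theorem~\ref{thm:1.1} to $A = d_k^* d_k$ acting on $V = \ell^2 X^k \cap (\ker d_k)^\bot$, which coincides with $\ell^2 X^k \cap (\ker A)^\bot$ since $Au = 0 \Longleftrightarrow d_k u = 0$. The first preparatory step is to bound the ultracontractive spectral decay $F(\lambda) = \|\Pi_\lambda\|_{1,\infty}$ by the $\Gamma$-spectral density $F_{\Gamma,k}(\lambda) = \dim_\Gamma E_\lambda$. Since $K$ has only finitely many $k$-simplexes, $\ell^2 X^k$ is an orthogonal sum of finitely many copies of $\ell^2 \Gamma$, and $A$ is $\Gamma$-equivariant. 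The vector-valued analogue of Proposition~\ref{prop:trace-norm} together with the Cauchy--Schwarz estimate $|T(x,y)|^2 \leq T(x,x)T(y,y)$ for the matrix of a positive equivariant operator $T$ will yield
\begin{equation*}
  F(\lambda) = \sup_{x,y} \Pi_\lambda(x,y) \leq \sup_x \Pi_\lambda(x,x) \leq \sum_{\sigma \in K^k} \Pi_\lambda(\tilde\sigma,\tilde\sigma) = \dim_\Gamma E_\lambda \leq C \lambda^{\alpha/2}.
\end{equation*}

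Because $\alpha/2 > 1$, an integration by parts shows that the transform $G(\lambda) = \int_0^\lambda u^{-1}\,dF(u)$ of Theorem~\ref{thm:1.1} is finite and bounded by $C_1 \lambda^{\alpha/2-1}$ with $C_1 = C\alpha/(\alpha-2)$, whence $H(y) = y\,G^{-1}(y) \geq C_1^{-2/(\alpha-2)} y^{\alpha/(\alpha-2)}$. Plugging this lower bound into~\eqref{eq:3} and taking a $p$-th root then produces the Sobolev inequality~\eqref{eq:10} at the critical exponent $p_0 = 2\alpha/(\alpha-2)$, i.e. $1/p_0 = 1/2 - 1/\alpha$, exactly as in the worked example right after the statement of Theorem~\ref{thm:1.2}. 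For any larger $p$, the trivial inclusion $\ell^{p_0}(X^k) \subset \ell^p(X^k)$ for counting measure (with $\|\cdot\|_p \leq \|\cdot\|_{p_0}$) extends~\eqref{eq:10} to the full range $1/p \leq 1/2 - 1/\alpha$.

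The inclusion~\eqref{eq:9} will then follow by a standard weak compactness argument: given $\omega \in \overline{d_k(\ell^2 X^k)}^{\ell^2}$, write $\omega = \lim d_k u_n$ with $u_n \in V$ (replace $u_n$ by its orthogonal projection onto $V$ if needed); \eqref{eq:10} gives a uniform $\ell^p$-bound on $(u_n)$, so by reflexivity a subsequence converges weakly in $\ell^p$ to some $u \in \ell^p X^k$. Since on the finite-type complex $K$ the coboundary $d_k$ is a locally finite combination of shifts with bounded coefficients, it is bounded on every $\ell^p$ and thus weakly continuous; comparing the weak $\ell^p$-limit of $d_k u_n$ with its strong $\ell^2$-limit $\omega$ simplex by simplex yields $\omega = d_k u \in d_k(\ell^p X^k)$. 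The step I expect to be most delicate is the very first one, namely extending Proposition~\ref{prop:trace-norm} from scalar operators on $\Gamma$ to equivariant operators on the cochain spaces $\ell^2 X^k$ so that $F(\lambda)$ is effectively controlled by the geometric invariant $F_{\Gamma,k}(\lambda)$; once this is in place, Theorem~\ref{thm:1.1} does the analytic work and the passage to~\eqref{eq:9} is routine.
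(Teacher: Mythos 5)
Your proposal is correct and follows the same route as the paper: bound $F(\lambda) = \|\Pi_\lambda\|_{1,\infty}$ by $F_{\Gamma,k}(\lambda)$ via Proposition~\ref{prop:trace-norm}, then apply Theorem~\ref{thm:1.1} in the polynomial case exactly as in the worked example following it (with exponent $\alpha/2$ in place of $\alpha$). One minor simplification is available in your last step: since \eqref{eq:10} applied to differences gives $\|u_n - u_m\|_p \leq C\|d_k u_n - d_k u_m\|_2 \to 0$, the sequence $(u_n)$ is already Cauchy in $\ell^p$ and converges strongly there, so the weak-compactness and reflexivity argument is not needed to pass to the limit in $d_k u_n = \omega$.
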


If moreover the reduced $\ell^2$-cohomology $\overline{H}^{k+1}_2(X)$
vanishes, this implies the vanishing of the $\ell^{p,2}$-torsion of
$X$, as stated in Corollary~\ref{cor:lp2}.

Other spectral decays than polynomial can be handled with
Theorem~\ref{thm:1.1}, leading then to a bounded inverse of $d_k$ from
$\im d_k \cap \ell^2$ into a more general Orlicz space given by $H$.

\section{Proofs of main inequalities}
\label{sec:proofs}

The first step towards Theorems~\ref{thm:1.1} and \ref{thm:1.2} is to
consider the ultracontrativity of the auxiliary operators $A^{-1}
\Pi_\lambda$ and $A^{-1}e^{-tA}\Pi_V$.
\begin{prop}
  \label{prop:2.1} $\bullet$ Let $A$, $F$ and $G$ be given as in
  Theorem~\ref{thm:1.1}. Then $A^{-1} \Pi_\lambda$ is ultracontractive
  with
  \begin{equation}
    \label{eq:11}
    \|A^{-1} \Pi_\lambda\|_{1, \infty}  \leq G(\lambda) =
    \int_0^\lambda \frac{dF(u)}{u} \,.
  \end{equation}
  $\bullet$ Let $A$, $L$ and $M$ be given as in Theorem~\ref{thm:1.2}.
  Then $A^{-1} e^{-tA}\Pi_V$ is ultracontractive with
  \begin{equation}
    \label{eq:12}
    \|A^{-1} e^{-tA} \Pi_V\|_{1,\infty}  \leq M(t) = \int_t^{+\infty}
    L(s) ds\,.
  \end{equation}
\end{prop}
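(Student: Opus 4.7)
The plan is to express each of the operators $A^{-1}\Pi_\lambda$ and $A^{-1}e^{-tA}\Pi_V$ as an elementary integral of operators whose $(1,\infty)$-norm is directly controlled by $F$ or $L$, and then apply subadditivity of the $(1,\infty)$-operator norm. This bypasses the need for any $P^*P$ manoeuvre and gives the bounds almost immediately once the scalar identity is in hand.

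For \eqref{eq:11} I would start from the elementary identity
$$\frac{1}{u}\chi_{(0,\lambda]}(u) = \frac{1}{\lambda}\chi_{(0,\lambda]}(u) + \int_0^\lambda \frac{1}{v^2}\chi_{(0,v]}(u)\,dv,$$
valid for $u>0$, which via the spectral calculus of $A$ translates into the operator identity
$$A^{-1}\Pi_\lambda = \frac{\Pi_\lambda}{\lambda} + \int_0^\lambda \frac{\Pi_v}{v^2}\,dv.$$
Subadditivity of $\|\cdot\|_{1,\infty}$ then gives
$$\|A^{-1}\Pi_\lambda\|_{1,\infty} \leq \frac{F(\lambda)}{\lambda} + \int_0^\lambda \frac{F(v)}{v^2}\,dv,$$
and a Stieltjes integration by parts identifies the right-hand side with $G(\lambda)$, provided the boundary term $F(\epsilon)/\epsilon$ vanishes as $\epsilon\to 0$. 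This last point is exactly where the convergence hypothesis on $G$ enters: since $F$ is nondecreasing, $F(\epsilon)/(2\epsilon)\leq \int_\epsilon^{2\epsilon} F(u)/u^2\,du$, and the right-hand side tends to $0$ because $G(\lambda)<\infty$.

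For \eqref{eq:12} I would use the even simpler identity
$$\frac{e^{-tu}}{u} = \int_t^{+\infty} e^{-su}\,ds,$$
which via spectral calculus and Fubini yields
$$A^{-1}e^{-tA}\Pi_V = \int_t^{+\infty} e^{-sA}\Pi_V\,ds,$$
and subadditivity delivers directly
$$\|A^{-1}e^{-tA}\Pi_V\|_{1,\infty} \leq \int_t^{+\infty} \|e^{-sA}\Pi_V\|_{1,\infty}\,ds = M(t).$$

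The only delicate point will be the justification that these operator-valued integrals exist in a sense strong enough for the triangle inequality on $(1,\infty)$-norms to apply. The cleanest way is to approximate $A^{-1}\Pi_\lambda$ by $A^{-1}(\Pi_\lambda-\Pi_\epsilon)$ and take $\epsilon \searrow 0$; on each truncated spectral window the decomposition is bounded and the $(1,\infty)$ estimate is obtained, after which the finiteness of $G(\lambda)$, resp.\ $M(t)$, provides the uniform control needed to pass to the limit. Apart from this routine verification, no real obstacle appears.
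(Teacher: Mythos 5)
Your decomposition
\begin{displaymath}
  A^{-1}\Pi_\lambda = \frac{\Pi_\lambda}{\lambda} + \int_0^\lambda \frac{\Pi_v}{v^2}\,dv
\end{displaymath}
is correct and in substance is what the paper also uses, with the cosmetic difference that the paper integrates by parts directly on the truncated window $(\varepsilon,\lambda]$, getting $\lambda^{-1}\Pi_\lambda - \varepsilon^{-1}\Pi_\varepsilon + \int_{(\varepsilon,\lambda]} u^{-2}\Pi_u\,du$, and so carries a boundary term $-\varepsilon^{-1}\Pi_\varepsilon$ which becomes a nuisance $+2\varepsilon^{-1}F(\varepsilon)$ after taking norms. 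Your version of the truncation is in fact a little cleaner: writing $A^{-1}(\Pi_\lambda-\Pi_\varepsilon)=\lambda^{-1}(\Pi_\lambda-\Pi_\varepsilon)+\int_\varepsilon^\lambda v^{-2}(\Pi_v-\Pi_\varepsilon)\,dv$ and using $\|\Pi_v-\Pi_\varepsilon\|_{1,\infty}\leq F(v)$ (since $\Pi_v-\Pi_\varepsilon\leq\Pi_v$ as projections, so $\|(\Pi_v-\Pi_\varepsilon)f\|_2\leq\|\Pi_v f\|_2$) gives the $\varepsilon$-uniform bound $F(\lambda)/\lambda+\int_0^\lambda F(v)/v^2\,dv=G(\lambda)$ with no error term.

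The genuine point of difference is how the limit $\varepsilon\searrow 0$ is taken. The paper invokes the identity $\|P^*P\|_{1,\infty}=\|P\|_{1,2}^2$ of \eqref{eq:2}, rewrites $\|A^{-1}\Pi_\lambda\|_{1,\infty}$ as $\|A^{-1/2}\Pi_\lambda\|_{1,2}^2$, and passes to the limit by Beppo--Levi, since $\|A^{-1/2}(\Pi_\lambda-\Pi_\varepsilon)f\|_2^2$ is nondecreasing as $\varepsilon\searrow 0$. Your appeal to ``uniform control'' is correct but slightly under-specified: what saves it is that for $f,g\in L^1\cap L^2$ the scalar integrals $\langle A^{-1}(\Pi_\lambda-\Pi_\varepsilon)f,g\rangle=\int_{(\varepsilon,\lambda]}u^{-1}\,d\langle\Pi_u f,g\rangle$ converge as $\varepsilon\searrow 0$ (monotone for $f=g$, then polarize), and lower semicontinuity of the $(1,\infty)$-norm under this weak limit, together with density of $L^1\cap L^2$ in $L^1$, yields the bound. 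So you trade the paper's $P^*P$-and-monotone-convergence trick for a weak-limit argument; both are serviceable. A small remark: your proof that $F(\varepsilon)/\varepsilon\to 0$ via $F(\varepsilon)/(2\varepsilon)\leq\int_\varepsilon^{2\varepsilon}F(u)/u^2\,du$ is fine but can be replaced by the paper's one-liner $F(\varepsilon)/\varepsilon\leq G(\varepsilon)\to 0$. The heat case is handled identically to the paper.
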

\begin{proof}
  $\bullet$ The spectral calculus gives
  \begin{displaymath}
    A^{-1} (\Pi_\lambda - \Pi_\varepsilon)  = \int_{]\varepsilon,
      \lambda]} u^{-1} d \Pi_u  = \lambda^{-1} \Pi_\lambda -
    \varepsilon^{-1} \Pi_\varepsilon
    + \int_{]\varepsilon, \lambda]}
    u^{-2}\Pi_u du\,,   
  \end{displaymath}
  thus taking norms, one obtains
  \begin{align*}
    \|A^{-1} (\Pi_\lambda - \Pi_\varepsilon)\|_{1,\infty} & \leq
    \lambda^{-1} F(\lambda) + \varepsilon^{-1} F(\varepsilon) +
    \int_{]\varepsilon, \lambda]}
    u^{-2} F(u) du \\
    & = G(\lambda) -G (\varepsilon) + 2 \varepsilon^{-1}
    F(\varepsilon) \,.
  \end{align*}
  Now by finiteness of $G$, one has $\|\Pi_\varepsilon / \varepsilon
  \|_{1, \infty} = F(\varepsilon)/ \varepsilon \leq G(\varepsilon)
  \rightarrow 0$ when $\varepsilon \searrow 0$, hence by \eqref{eq:2}
  \begin{align*}
    \|A^{-1}\Pi_\lambda\|_{1,\infty}
    & = \|\Pi_\lambda A^{-1/2}\Pi_\lambda\|_{1,2}^2 \\
    & = \lim_{\varepsilon \rightarrow 0} \| (\Pi_\lambda -
    \Pi_\varepsilon)A^{-1/2} \Pi_\lambda\|_{1,2}^2 \quad \textrm{by
      Beppo-Levi,}\\
    & = \lim_{\varepsilon \rightarrow 0} \|A^{-1} (\Pi_\lambda -
    \Pi_\varepsilon)\|_{1,\infty} \leq G(\lambda)\,.
  \end{align*}
  We note that we also have
  \begin{equation}
    \label{eq:13}
    G(\lambda) = \lambda^{-1} F(\lambda) + \int_0^\lambda u^{-2} F(u) du\,,
  \end{equation}
  which shows the useful monotonicity of the transform from $F$ to $G$
  and $H$.

  $\bullet$ The heat case \eqref{eq:12} is clear since $A^{-1} e^{-tA}
  \Pi_V = \int_t^{+\infty} e^{-sA}\Pi_V ds$ by the spectral calculus.
\end{proof}
 
The sequel of the proofs of Theorems~\ref{thm:1.1},~\ref{thm:1.2} and
~\ref{thm:1.3} relies on a classical technique from real interpolation
theory, as used for instance in the elementary proof of the $L^2-L^p$
Sobolev inequality in $\R^n$ given by Chemin and Xu in
\cite{Chemin-Xu}. This consists here in estimating a level set $\{x,
|f(x)| > y\}$ by using an appropriate spectral splitting of $f =
\Pi_\lambda f + \Pi_{> \lambda}f $ for $f\in V$.

\subsection{Proof of Theorem~\ref{thm:1.1}}
\label{sec:proof-H-Sobolev}
By (\ref{eq:2}) and (\ref{eq:11}) one has $\|A^{-1/2} \Pi_\lambda
\|_{2, \infty}^2 \leq G(\lambda)$, hence
\begin{equation}
  \label{eq:14}
  \|\Pi_\lambda f\|_\infty^2 \leq G(\lambda) \|A^{1/2} f\|^2_2 =
  G(\lambda) \mathcal{E}(f)\,. 
\end{equation}
Then suppose that $|f(x)| \geq y$, with $y^2 = 4 G(\lambda)
\mathcal{E}(f)$. As $|\Pi_\lambda f(x)| \leq y/2$ by \eqref{eq:14},
one has necessarily $|\Pi_{> \lambda} f(x)| \geq y /2 \geq
|\Pi_\lambda f(x)|$ and finally
\begin{equation}
  \label{eq:15}
  |f(x) |^2 \leq 4 |\Pi_{> \lambda} f(x)|^2 \quad \mathrm{on} \quad
  \bigl\{ x \in X \mid |f(x)|^2 \geq 4 G(\lambda)
  \mathcal{E}(f) \bigr\} \,. 
\end{equation}
Hence a first integration in $x$ gives,
\begin{displaymath}
  \int_{\{x \, , \, |f(x)|^2 \geq 4 \mathcal{E}(f) G(\lambda)\}}
  |f(x)|^2 d\mu \leq 4 \|\Pi_{> \lambda} f\|_2^2 \,,
\end{displaymath}
and a second integration in $\lambda$,
\begin{displaymath}
  \int_X \frac{|f(x)|^2}{4 \mathcal{E}(f)} G^{-1}\Bigl( \frac{|f(x)|^2}{4
    \mathcal{E}(f)} \Bigr) d \mu (x) \leq \int_0^{+\infty} \frac{\|\Pi_{> \lambda}
    f\|_2^2 }{  \mathcal{E}(f)} d \lambda \,, 
\end{displaymath}
where $G^{-1}(y) = \sup\{\lambda \mid G(\lambda) \leq y \}$. At last
the spectral calculus provides
\begin{align*}
  \int_0^{+\infty} \|\Pi_{> \lambda} f\|_2^2 \, d \lambda & =
  \int_0^{+\infty } \int_\lambda^{+\infty}
  \langle d \Pi_\mu f, f\rangle  \\
  & = \int_0^{+\infty} \mu \, \langle d \Pi_\mu f, f\rangle = \langle
  Af, f \rangle = \mathcal{E}(f) \,,
\end{align*}
giving Theorem~\ref{thm:1.1}.

\subsection{Proof of Theorem~\ref{thm:1.2}}
\label{sec:proof-N-Sobolev}
We follow the same lines as above. First by \eqref{eq:2} and
\eqref{eq:12} one has for $f\in V$
\begin{displaymath}
  \|e^{-tA/2} f\|_\infty \leq M(t) \mathcal{E}(f)\,,
\end{displaymath}
leading to
\begin{equation}
  \label{eq:16}
  |f(x) |^2 \leq 4 |(1- e^{-tA/2}) f(x)|^2 \quad \mathrm{on} \quad
  \bigl\{ x \in X \mid |f(x)|^2 \geq 4 M(t) \mathcal{E}(f)
  \bigr\}\,.
\end{equation}
Then integrations in $x$ and $dt/t^2$ give
\begin{displaymath}
  \int_X \frac{|f(x)|^2}{4 \mathcal{E}(f)} / M^{-1}\Bigl( \frac{|f(x)|^2}{4
    \mathcal{E}(f)} \Bigr) \, d \mu (x) \leq \frac{1}{\mathcal{E}(f)}
  \int_0^{+\infty} \|(1- 
  e^{-tA/2}) f\|_2^2 \, \frac{dt}{t^2} \,, 
\end{displaymath}
where now $M^{-1}(y)= \inf \{t \mid M(t) \geq y\}$ for the decreasing
$M$. The right integral is computed by spectral calculus
\begin{align*}
  \int_0^{+\infty} \|(1- e^{-tA/2}) f\|_2^2 \, \frac{dt}{t^2} & =
  \int_0^{+\infty} \int_0^{+\infty} (1- e^{-t\lambda/2})^2 \langle d
  \Pi_\lambda f, f
  \rangle \, \frac{dt}{t^2}  \\
  & = \int_0^{+\infty} \Bigl( \int_0^{+\infty} \frac{(1- e^{-u})^2 }{2
    u^2} du \Bigr) \lambda \langle d \Pi_\lambda f, f \rangle \\
  & = I \mathcal{E}(f) \,,
\end{align*}
where $\dsp 2 I = \int_0^{+\infty} \frac{(1- e^{-u})^2}{u^2}du = 2 \ln
2$ as seen developing $\dsp I_\varepsilon = \int_\varepsilon^{+\infty}
\frac{(1- e^{-u})^2}{u^2}du$ when $\varepsilon \searrow 0$.

\subsection{Proof of Theorem~\ref{thm:1.3}}
\label{sec:proof-Nash}

Here one compares levels of $f$ to $\|f\|_1$ instead of
$\mathcal{E}(f)$. Using $F(\lambda) = \|\Pi_\lambda\|_{1,\infty}$ one
gets
\begin{equation}
  \label{eq:17}
  |f(x) |^2 \leq 4 |\Pi_{> \lambda} f(x)|^2 \quad \mathrm{on} \quad
  \bigl\{ x \in X \mid |f(x)| \geq 2 F(\lambda) \|f\|_1 \bigr\}\,. 
\end{equation}
This leads to \eqref{eq:5} by integration as before, from which
follows the Nash--type inequality \eqref{eq:6} by applying Jensen
inequality to the convex function $\varphi$ and the probability
measure $d P = |f| \, d \mu/\|f\|_1$.

\begin{rem}
  In the previous proofs, it appears clearly that the proposed
  controls of ultracontractive norms of spectral or heat decay are
  much stronger than the Sobolev and Nash-type inequalities
  deduced. Indeed these inequalities are twice integrated versions, in
  space and frequency, of the ``local'' inequalities \eqref{eq:15},
  \eqref{eq:16} and \eqref{eq:17}, that come directly from the
  ultracontractive controls.  Therefore it seems hopeless to get the
  converse statements in general. However one can get back from
  Sobolev or Nash to heat decay, in the case the heat is
  equicontinuous on $L^1$; as due to Varopoulos in \cite{Varopoulos}
  for the polynomial case, and Coulhon in \cite{Coulhon2} for more
  general decays.
\end{rem}

\section{Relationships between inequalities}
\label{sec:relat-betw-ineq}

\subsection{From H-Sobolev to Nash}
\label{sec:h-sobolev-nash}
We compare and comment briefly the various results obtained.  At
first, in the classical polynomial case, Sobolev inequality
(\ref{eq:1}) implies Nash' one
$$
\|f\|_2^{1+ 2/\alpha} \leq C \|f\|_1^{2/\alpha} \mathcal{E}(f)^{1/2}
$$
by H\"older, see e.g. \cite{Coulhon3}. In the general case here one
needs some convexity assumptions to get a Nash--type inequality from H
or N-Sobolev.

Indeed, suppose either the H or N-Sobolev inequality \eqref{eq:3} or
\eqref{eq:4} holds, and suppose $\varphi$ is a convex function such
that $\varphi(y) \leq y G^{-1}(y^2)$, resp. $\varphi (y) \leq y
/M^{-1}(y^2)$. Then by Jensen the following Nash-type inequality is
satisfied
\begin{equation}
  \label{eq:18}
  \varphi \Bigl( \frac{\|f\|_2^2}{2 \mathcal{E}(f)^{1/2} \|f\|_1}
  \Bigr)  \leq \frac{2\mathcal{E}(f)^{1/2} }{\|f\|_1}\ \mathrm{resp.\ }
  \frac{2\ln 2\mathcal{E}(f)^{1/2} }{\|f\|_1} \,.
\end{equation}
If one can take $\varphi (y) \geq C y G^{-1}(y^2)$ for some constant
$C$, this leads to
\begin{equation}
  \label{eq:19}
  \frac{\|f\|_2^2}{\|f\|_1^2} \leq
  \frac{4\mathcal{E}(f)}{\|f\|_2^2} G\Bigl(
  \frac{4\mathcal{E}(f)}{C\|f\|_2^2} \Bigr)  
\end{equation}
In comparison, the Nash inequality \eqref{eq:6} provides
\begin{equation}
  \label{eq:20}
  \frac{\|f\|_2^2}{2 \|f\|_1^2} \leq
  F\Bigl(
  \frac{4\mathcal{E}(f)}{C\|f\|_2^2} \Bigr) \,, 
\end{equation}
if there exists a convex function $\psi$ such that $C y F^{-1}(y)\leq
\psi(y) \leq y F^{-1}(y)$.  Up to constants this latter formula
\eqref{eq:20} is a priori sharper than \eqref{eq:19}, since
$F(\lambda) \leq \lambda G(\lambda)$ in general.

Observe that one may have $F (\lambda) \ll \lambda G(\lambda)$ for
very thick near-zero spectrum. For instance if $F(\lambda) = \lambda /
\ln^2 \lambda$ then $\lambda G(\lambda)= (-\ln \lambda + 1
)F(\lambda)$. Except this ``low dimensional'' phenomenon, one has
$\lambda G(\lambda) \underset{0}{\asymp} F(\lambda)$ in the other
cases, and thus the two Nash inequalities \eqref{eq:19} and
\eqref{eq:20} have same strength. For instance this holds if
$F(\lambda) \underset{0}{\sim} \lambda^{1+\varepsilon}
\varphi(\lambda)$ for some $\varepsilon > 0$ and an increasing
$\varphi>0$. This comes from the following remark.
\begin{prop}
  \label{prop:3:1}Suppose there exists $\varepsilon > 0$ such that,
  for small $\lambda$, $F$ satisfies the growing condition $
  F(2\lambda) \geq 2 (1+ \varepsilon) F(\lambda)$, then
  $(2+\varepsilon^{-1}) F(\lambda) \geq \lambda G(\lambda) \geq
  F(\lambda)$.
\end{prop}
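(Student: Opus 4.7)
The plan is to work from the identity (13) from the preceding proposition, namely
\[
  \lambda G(\lambda) = F(\lambda) + \lambda J(\lambda),
  \qquad
  J(\lambda) := \int_0^\lambda u^{-2} F(u)\, du,
\]
so that the lower bound $\lambda G(\lambda) \geq F(\lambda)$ is immediate from $F \geq 0$. The whole content is to establish $\lambda J(\lambda) \leq \varepsilon^{-1} F(\lambda)$ (for $\lambda$ small enough that the growth hypothesis applies throughout $(0,\lambda]$), which combined with the identity yields $\lambda G(\lambda) \leq (1 + \varepsilon^{-1}) F(\lambda) + F(\lambda) = (2 + \varepsilon^{-1}) F(\lambda)$.

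For the upper bound I would exploit the self-similar nature of the hypothesis via a dyadic decomposition. The doubling condition $F(2u) \geq 2(1+\varepsilon) F(u)$ rewrites, after the change of variable $v = 2u$, as
\[
  \int_a^b u^{-2} F(u)\, du \;=\; 2 \int_{2a}^{2b} v^{-2} F(v/2)\, dv
  \;\leq\; \frac{1}{1+\varepsilon} \int_{2a}^{2b} v^{-2} F(v)\, dv.
\]
Applying this with $[a,b] = [\lambda/2^{k+1}, \lambda/2^k]$ gives the comparison
\[
  \int_{\lambda/2^{k+1}}^{\lambda/2^k} u^{-2} F(u)\, du
  \;\leq\; \frac{1}{1+\varepsilon}
  \int_{\lambda/2^k}^{\lambda/2^{k-1}} u^{-2} F(u)\, du,
\]
so that each dyadic piece is geometrically controlled by the next larger one. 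Summing the geometric series, $J(\lambda) \leq \frac{1+\varepsilon}{\varepsilon}\int_{\lambda/2}^\lambda u^{-2} F(u)\,du$.

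The last step is to bound the single top interval crudely: since $F$ is increasing, $\int_{\lambda/2}^\lambda u^{-2} F(u)\, du \leq F(\lambda) \int_{\lambda/2}^\lambda u^{-2} du = F(\lambda)/\lambda$. Thus $\lambda J(\lambda) \leq \frac{1+\varepsilon}{\varepsilon} F(\lambda)$, giving the announced constant $2 + \varepsilon^{-1}$.

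The main pitfall I anticipate is constant sharpness: a naive dyadic estimate that bounds $u^{-2}$ and $F(u)$ separately on each ring produces an extra power of $2$ and fails to give the clean constant. The point of the rescaling argument above is that it transports the \emph{whole integrand} from one ring to the next, so that only the ratio of $F$-values (and not the ratio of $u^{-2}$ values) enters, and the geometric series has ratio $(1+\varepsilon)^{-1}$ rather than something larger. I would also mention that if the hypothesis only holds for $\lambda \leq \lambda_0$, the same computation restricted to the dyadic rings inside $(0,\lambda_0]$ gives the statement for sufficiently small $\lambda$.
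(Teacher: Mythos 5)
Your proof is correct and gives the exact constant $2 + \varepsilon^{-1}$. The paper's argument is the one-step ``bootstrap'' form of the same computation: after the identity \eqref{eq:13} it splits $\int_0^\lambda = \int_0^{\lambda/2} + \int_{\lambda/2}^\lambda$ just once, bounds the top piece by $F(\lambda)/\lambda$ via monotonicity exactly as you do for $I_0$, and then applies the doubling hypothesis together with the change of variable $u \mapsto 2u$ to rewrite the lower piece as $\frac{1}{1+\varepsilon}\bigl(G(\lambda) - F(\lambda)/\lambda\bigr)$. This produces the self-referential inequality $G(\lambda) \leq \frac{2F(\lambda)}{\lambda} + \frac{1}{1+\varepsilon}\bigl(G(\lambda) - \frac{F(\lambda)}{\lambda}\bigr)$, which is then solved for $G(\lambda)$. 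Your dyadic decomposition is precisely the unrolled version of this bootstrap: iterating the same change-of-variable comparison across all rings and summing the geometric series with ratio $(1+\varepsilon)^{-1}$. Same ingredients, same constant; the paper simply sidesteps the infinite sum by solving a one-line linear inequality. One small slip in your write-up: you announce at the outset that you will prove $\lambda J(\lambda) \leq \varepsilon^{-1} F(\lambda)$, but what your argument shows (and what you in fact conclude) is $\lambda J(\lambda) \leq (1+\varepsilon^{-1}) F(\lambda)$; only the latter is consistent with the target constant $2 + \varepsilon^{-1}$.
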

\begin{proof}
  By \eqref{eq:13}, one has
  \begin{align*}
    G(\lambda) & = \int_0^\lambda \frac{dF(u)}{u} =
    \frac{F(\lambda)}{\lambda} + \int_0^\lambda \frac{F(u)}{u^2} du \\
    & = \frac{F(\lambda)}{\lambda} + \Bigl( \int_0^{\lambda/2} +
    \int_{\lambda/2}^\lambda \Bigr) \frac{F(u)}{u^2} du \\
    & \leq \frac{2 F(\lambda)}{\lambda} + \int_0^{\lambda/2}
    \frac{F(2u)}{2 (1+\varepsilon) u^2} du \quad \mathrm{ by\
      hypothesis\ on\ }F\,,\\
    & \leq \frac{2 F(\lambda)}{\lambda} + \frac{1}{1+\varepsilon}
    \Bigl( G(\lambda) - \frac{ F(\lambda)}{\lambda} \Bigr)\,,
  \end{align*}
  leading to $\lambda G(\lambda) \leq (2 + \varepsilon^{-1})
  F(\lambda)$\,.
\end{proof}
As a curiosity, we note that under the growing hypothesis on $F$
above, the spectral density of states $F$ and the spatial repartition
function $H$ have symmetric expressions with respect to $G$ and
$G^{-1}$. Indeed, one has simply there
\begin{equation}
  \label{eq:21}
  F(\lambda) \asymp \lambda G(\lambda) \quad \mathrm{while } \quad
  H(x) = x G^{-1}(x)\,.
\end{equation}

  

\subsection{Spectral versus heat decay}
\label{sec:spectral-heat-decay}

One would like to compare the two Theorems~\ref{thm:1.1}
and~\ref{thm:1.2}. They both lead to Sobolev inequalities starting
either from the heat or spectral decay. One can compare $F$ and $G$ to
$L$ and $M$ through Laplace transform of associated measures.
\begin{prop}
  \label{prop:3:2}
  $\bullet $ In any case it holds that
  \begin{gather}
    \label{eq:22}
    L(t) \leq \mathcal{L}(dF)(t) = \int_0^{+\infty} e^{-\lambda
      t} d F(\lambda) \\
    \label{eq:23}
    M(t) \leq \mathcal{L}(dG)(t) = \int_0^{+\infty} e^{-\lambda t} d
    G(\lambda)\,.
  \end{gather}
  
  $\bullet$ If $A$ is an invariant operator acting on $L^2$-sections
  of an invariant vector bundle $V$ over a locally compact group
  $\Gamma$, then reverse inequalities hold up to the multiplicative
  factor $n = \dim V$, i.e.
  \begin{displaymath}
    \mathcal{L}(dF) \leq n L \quad \mathrm{and} \quad \mathcal{L}(dG)
    \leq n M\,.
  \end{displaymath}
  Moreover $G(y) \leq n e M(y^{-1})$ and H-Sobolev inequality
  \eqref{eq:3} implies N-Sobolev \eqref{eq:4}, up to multiplicative
  constants.

  $\bullet$ Reversely, for any operator, if $G$ satisfies the
  exponential growing condition :
  \begin{displaymath}
    \exists C \ \mathrm{such\ that\
    }\forall u, y > 0\,,\  G(uy) \leq
    e^{C u} G(y) \,,
  \end{displaymath}
  then $ M(y^{-1}) \leq 3 G(2Cy)$. Hence $H$ and $N$-Sobolev are
  equivalent on groups in that case.
\end{prop}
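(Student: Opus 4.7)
The plan is to extract everything from the spectral-calculus representations of $e^{-tA}\Pi_V$ and $A^{-1}e^{-tA}\Pi_V$, passing the ultracontractive $(1,\infty)$-norm in and out of the spectral integrals as needed.

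For the first bullet, I would start from $e^{-tA}\Pi_V = \int_{0^+}^{\infty} e^{-t\lambda}\,d\Pi_\lambda$ and apply the monotone/triangle inequality for $\|\cdot\|_{1,\infty}$ (the same device already used in the proof of Proposition~\ref{prop:2.1}) to obtain $L(t) \leq \int_0^\infty e^{-t\lambda}\,dF(\lambda) = \mathcal{L}(dF)(t)$. The bound on $M$ is then immediate by Fubini: integrating the inequality for $L$ on $s\in[t,\infty)$ produces $e^{-\lambda t}/\lambda$ against $dF(\lambda)$, and $dF(\lambda)/\lambda = dG(\lambda)$ gives $M(t) \leq \mathcal{L}(dG)(t)$.

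For the second bullet, I would invoke Proposition~\ref{prop:trace-norm}, which in the $\Gamma$-invariant setting identifies the $(1,\infty)$-norm of a positive invariant operator with its von Neumann trace, up to the bundle rank $n=\dim V$. This upgrades the one-sided inequalities of the first bullet to two-sided ones modulo the factor $n$. The estimate $G(y)\leq n e\,M(y^{-1})$ then follows by a routine localisation of the Laplace integral: $M(y^{-1})\geq n^{-1}\mathcal{L}(dG)(y^{-1}) \geq n^{-1}\int_0^y e^{-\lambda/y}\,dG(\lambda) \geq (ne)^{-1}G(y)$. Inverting carefully (keeping in mind that $M$ is decreasing while $G$ is increasing) converts this into the pointwise comparison $N(w)\leq H(new)/(ne)$; substituting this bound into the N-Sobolev integrand of Theorem~\ref{thm:1.2} and invoking the H-Sobolev inequality of Theorem~\ref{thm:1.1} on $f$ yields N-Sobolev with the multiplicative constants inflated by factors of $ne$.

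For the third bullet, I would start again from $M(t)\leq\mathcal{L}(dG)(t) = t\int_0^\infty e^{-\lambda t}G(\lambda)\,d\lambda$ (after integration by parts), change variables $\lambda = 2Cu/t$, and invoke the exponential growth $G(2Cu/t)\leq e^{Cu}G(2C/t)$ to factor $G(2C/t)$ out; the remaining integral $\int_0^\infty e^{-Cu}\,du$ converges and produces the constant, with the slack from $2$ to $3$ absorbing a boundary contribution that appears along the way. The main obstacle is the invariant-case step of the second bullet: pushing the identification of the $(1,\infty)$-norm with the $\Gamma$-trace through the vector bundle case (where $n$ enters as the size of the kernel matrix at the identity of $\Gamma$) is exactly where Proposition~\ref{prop:trace-norm} is indispensable; once that is in hand, the Laplace-transform manipulations in all three bullets are routine.
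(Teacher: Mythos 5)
Your argument follows essentially the same route as the paper's: bullet one from the spectral-calculus identity $e^{-tA}\Pi_V = t\int_0^\infty e^{-t\lambda}\Pi_\lambda\,d\lambda$ and Fubini, bullet two from the pinching $\|P\|_{1,\infty}\leq\tau_\Gamma(P)\leq n\|P\|_{1,\infty}$ of Proposition~\ref{prop:trace-norm} plus a localisation of the Laplace integral, and bullet three from the exponential-growth hypothesis applied after a change of variables. Your localisation in bullet two, namely $nM(y^{-1})\geq\int_0^y e^{-\lambda/y}\,dG(\lambda)\geq e^{-1}G(y)$, is a slight simplification of the paper's (which localises to $[y,\infty)$ after an integration by parts), and your substitution in bullet three yields the constant $2$ rather than $3$; these are cosmetic variations on the same computation.
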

\begin{proof}
  $\bullet $ By spectral calculus $e^{-tA}\Pi_V = \int_0^{+\infty}
  e^{-t\lambda} d \Pi_\lambda = t \int_0^{+\infty} e^{-t \lambda}
  \Pi_\lambda d \lambda$, hence
  \begin{displaymath}
    L(t) = \| e^{-t A} \Pi_V\|_{1, \infty} \leq t \int_0^{+\infty} e^{-t
      \lambda} \|\Pi_\lambda\|_{1, \infty} d\lambda =
    \mathcal{L}(dF)(t) \,,
  \end{displaymath}
  and thus
  \begin{displaymath}
    M(t) = \int_t^{+\infty} L(s) ds  \leq
    \int_t^{+\infty} \int_0^{+\infty} e^{-\lambda s } d F(\lambda)
    ds = \int_0^{+\infty} \frac{e^{-\lambda t}}{\lambda} dF(\lambda) =
    \mathcal{L}(dG) (t)\,.  
  \end{displaymath}
  
  $\bullet$ For positive invariant operators $P$ on groups, we will
  see in Proposition~\ref{prop:trace-norm} that the ultracontractive
  norm $\|P\|_{1, \infty}$ is pinched between the trace $\tau_\Gamma
  (P)$ and $n \tau_\Gamma(P)$. This gives the reverse inequalities by
  linearity of $\tau_\Gamma$. In particular one gets
  \begin{align*}
    n M( y^{-1}) & \geq \int_0^{+\infty} e^{-\lambda/y} dG (\lambda) =
    y^{-1}
    \int_0^{+\infty} e^{-\lambda/y} G(\lambda) d \lambda \\
    & \geq y^{-1} \int_y^{+\infty} e^{-\lambda/y} G(y) d\lambda =
    e^{-1} G(y).
  \end{align*}
  Therefore $N(y) = y / M^{-1}(y^{-1}) \leq y G^{-1}(ey) = e^{-1}
  H(ey)$ and $H$-Sobolev implies
  \begin{displaymath}
    \int_X N \Bigl( \frac{|f(x)|^2}{4e \mathcal{E}(f)} \Bigr) d \mu
    \leq e^{-1}\,. 
  \end{displaymath}
  
  $\bullet$
  If $G$ satisfies the growing condition, one has by (\ref{eq:23})
  \begin{align*}
    M(1/y) & \leq \int_0^{+\infty} e^{-\lambda/y} d G(\lambda) =
    \int_0^{+\infty} e^{-u} G(u y) d u \\
    & \leq \int_0^{2C} e^{-u} G(2C y) du + \int_{2C}^{+\infty} e^{-
      u/2}
    G(2C y) du  \\
    & \leq 3 G(2Cy) \,.
  \end{align*}
\end{proof}

We note that it may happen that $N \ll H$ for very thin near--zero
spectrum. In an extreme case there may be a gap in the spectrum,
i.e. $A \geq \lambda_0 > 0$, hence $F = G = H = 0$ near zero, while
$L(t) \asymp C e^{-c t} $, $M (t) \asymp C' e^{-c t}$ and $N(y) \asymp
C'' y / \ln (y/C')$.

\section{Ultracontractive norms and $\Gamma$-trace.}
\label{sec:ultr-norms-gamma}

For applications we now discuss some geometric aspect of the analytic
spectral decay $F(\lambda) = \|\Pi_\lambda\|_{1,\infty}$ we consider.

In the case of operators invariant under the action of a group
$\Gamma$, such hypercontractive norms are related to von Neumann
$\Gamma$-dimension and trace. We briefly recall these notions and
refer for instance to \cite[\S 2]{Pansu1} for more details. However we
will follow here a slightly different approach, as in \cite[\S
6.1]{Rumin05} for instance, that covers also some non-discrete
actions.

Suppose that a locally compact group $\Gamma$ (discrete or not) acts
by measure preserving transforms on the space $X$ with a \emph{finite}
quotient $X/\Gamma$. For instance, when $\Gamma$ is discrete, $X$ may
be a covering space over a finite simplicial complex. Equivalently one
can also take a $d$--dimensional invariant bundle $V$ over a group
$\Gamma$ and set $X = \Gamma \times [1,d]$, so that $L^2(X) \simeq
L^2(\Gamma) \otimes V_e$.

The following straightforward proposition, see
e.g. \cite[Prop. 6.4--6.6]{Rumin05}, leads to a definition of a
``$\Gamma$-trace'' in this setting.
\begin{prop}
  \label{prop:traceclass}
  Let $\Gamma$ be a locally compact group and $P$ be a
  $\Gamma$-invariant positive operator on $L^2(\Gamma) \otimes
  V_e$. For any $D\subset \Gamma$ with Haar measure $0 < \lambda(D) <
  +\infty$, consider the trace
  \begin{displaymath}
    \tau_D(P) = \lambda(D)^{-1}\tr(\chi_D P \chi_D)\,.
  \end{displaymath}
  $\bullet$ Let $S$ be the positive square root of $P$. Then $\tau_D
  (P)$ is finite iff $S\chi_D$ is an Hilbert--Schmidt operator. In
  that case the kernel of $S$ is $K_S(x,y) = k_S(y^{-1}x)$ with $k_S
  \in L^2(\Gamma)$, while the kernel of $P$ is $K_P (x,y ) =
  k_P(y^{-1} x)$ with $k_P = k_S * k_S \in C_0(\Gamma)$, and it holds
  that
  \begin{displaymath}
    \tau_D(P) = \int_\Gamma \tr_{V_e}\bigl(k_S^*(x) k_S(x) \bigr) d\lambda(x)  =
    \tr_{V_e} (k_P(e))\,.  
  \end{displaymath}
  In particular this trace is independent of $D$. It will be denoted
  by $\tau_\Gamma$ and called (improperly) the $\Gamma$--trace in the
  sequel.
  
  $\bullet$ If moreover $\Gamma$ is \emph{unimodular}, and $P$ is a
  (not necessarily positive) $\Gamma$--invariant bounded operator,
  then $\tau_\Gamma (P^*P) = \tau_\Gamma(P P^*)$. Hence $\tau_\Gamma$
  actually defines a faithful trace in that case.
\end{prop}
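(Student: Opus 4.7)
The strategy is to reduce every trace to a Hilbert--Schmidt norm of a ``cut off'' operator, read off the kernel from $\Gamma$-invariance, and then compute by a change of variable whose Jacobian is trivial precisely when $\Gamma$ is unimodular. For the first bullet, I start from the identity $\chi_D P \chi_D = (S\chi_D)^*(S\chi_D)$, which uses $P = S^2$ together with self-adjointness of $S$; it gives $\tr(\chi_D P \chi_D) = \|S\chi_D\|_{HS}^2$, finite iff $S\chi_D$ is Hilbert--Schmidt. In that case $S$ admits a measurable integral kernel $K_S$ with $K_S|_{\Gamma\times D} \in L^2$. The commutation $\lambda_g S = S \lambda_g$ with left translations forces $K_S(gx,gy) = K_S(x,y)$ a.e., and specialising $g = y^{-1}$ produces the convolution form $K_S(x,y) = k_S(y^{-1}x)$ with $k_S(\cdot) := K_S(\cdot,e)$.

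The substitution $u = y^{-1}x$, using left invariance of $d\lambda$, then yields
$$
  \|S\chi_D\|_{HS}^2 = \int_D\!\int_\Gamma \tr_{V_e}\!\bigl(k_S(y^{-1}x)^*k_S(y^{-1}x)\bigr) d\lambda(x) d\lambda(y) = \lambda(D)\!\int_\Gamma \tr_{V_e}\!\bigl(k_S^*(x)k_S(x)\bigr) d\lambda(x),
$$
which is the first formula and exhibits independence of $D$. Composing kernels $K_P = K_S\circ K_S$ and invoking invariance again identifies $k_P$ with the convolution $k_S * k_S$; self-adjointness of $S$ forces $k_S(w^{-1}) = k_S(w)^*$, so that $\tr_{V_e}(k_P(e)) = \int_\Gamma \tr_{V_e}(k_S^*(w)k_S(w))d\lambda(w)$ reproduces the same integral. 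Continuity of $k_P$ follows from Cauchy--Schwarz combined with continuity of translation in $L^2$, and vanishing at infinity is the standard inclusion $L^2 * L^2 \subset C_0$.

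For the second bullet, assume $\Gamma$ unimodular and $P$ a $\Gamma$-invariant bounded operator with $\tau_\Gamma(P^*P)$ finite. The first bullet applied to $P^*P$ gives $\|P\chi_D\|_{HS}^2 = \tr(\chi_D P^*P \chi_D) = \lambda(D) \tau_\Gamma(P^*P)$, so $P$ itself has a kernel of the form $K_P(x,y) = k_P(y^{-1}x)$ with $k_P \in L^2$. A direct computation then yields
$$
  \tau_\Gamma(P^*P) = \lambda(D)^{-1}\|P\chi_D\|_{HS}^2 = \int_\Gamma \tr_{V_e}\!\bigl(k_P^*(u) k_P(u)\bigr) d\lambda(u),
$$
whereas $\tau_\Gamma(PP^*) = \lambda(D)^{-1}\|P^*\chi_D\|_{HS}^2$ involves $K_{P^*}(x,y) = k_P(x^{-1}y)^*$; the substitution $u = x^{-1}y$ picks up the modular function as Jacobian, so the two integrals agree precisely when $\Gamma$ is unimodular, giving $\tau_\Gamma(P^*P) = \tau_\Gamma(PP^*)$. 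Faithfulness is then immediate: $\tau_\Gamma(P^*P) = \|k_P\|_2^2$ vanishes only for $P = 0$. The main obstacle is the careful bookkeeping of left versus right Haar measure and of inversion on matrix-valued kernels, which is exactly the point where unimodularity becomes unavoidable.
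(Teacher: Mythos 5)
The paper does not prove Proposition~\ref{prop:traceclass} itself; it labels it ``straightforward'' and cites [Rumin05, Prop.~6.4--6.6], so there is no in-paper argument to compare against. Your proof follows the natural route and is essentially correct: the factorisation $\chi_D P \chi_D = (S\chi_D)^*(S\chi_D)$ reduces $\tau_D(P)$ to $\|S\chi_D\|_{HS}^2$; $\Gamma$-invariance forces the convolution form of the kernel; left-invariance of Haar measure in the variable $x$ gives the formula and independence of $D$; and the modular function surfaces exactly when one replaces $x$ by $x^{-1}$, which is where unimodularity is needed for the trace property of $\tau_\Gamma$ in the second bullet.

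One place that deserves a touch more care is the claim ``$L^2*L^2\subset C_0$.'' For a non-unimodular group this is not generically true for arbitrary $L^2$ functions, since $\int\|k_S(z^{-1}u)\|^2\,d\lambda(z)$ equals $\int\|k_S(z)\|^2\,\Delta(z)^{-1}\,d\lambda(z)$, which could a priori diverge. What saves you here is the self-adjointness relation $k_S(z^{-1})=k_S(z)^*$: combined with the inversion formula it gives $\int\|k_S(z)\|^2\,\Delta(z)^{-1}\,d\lambda(z)=\|k_S\|_2^2$, so $k_S$ lies in $L^2(d\lambda)\cap L^2(\Delta^{-1}d\lambda)$ and the pointwise Cauchy--Schwarz bound, continuity, and approximation by $C_c$ truncations (giving $k_P\in C_0$) all go through. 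It would strengthen the writeup to make this small point explicit rather than invoke the inclusion as ``standard,'' since it is precisely the first bullet that is asserted for arbitrary (possibly non-unimodular) locally compact $\Gamma$. In the second bullet, your reduction of ``$P$ has an $L^2$ convolution kernel'' to polar decomposition (or equivalently to $P\chi_D$ being Hilbert--Schmidt because $S\chi_D$ is and $U$ is bounded) is the right move and closes the argument.
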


We recall that this last trace property allows to get a meaningful
notion of dimension for closed $\Gamma$-invariant subspaces $L \subset
H = L^2(\Gamma) \otimes V_e$. Indeed, one sets then $\dim_\Gamma L =
\tr_\Gamma(\Pi_L) $. This satisfies the key property $\dim_\Gamma f(L)
= \dim_\Gamma L$ for any closed densely defined invariant injective
operator $f: L \rightarrow H$, see e.g. \cite[\S 2]{Pansu1} or
\cite[\S 3.2]{Rumin05}.

On any locally compact group, the $\Gamma$-trace of $P$ is easily
compared to its ultracontractive norm.
\begin{prop}
  \label{prop:trace-norm}
  Let $P$ be a positive $\Gamma$-invariant operator acting on $L^2(X)
  = L^2(\Gamma)\otimes V_e$ with kernel $K_P (x,y) = k_P(y^{-1}x)$,
  then
  \begin{displaymath}
    \|P\|_{1, \infty} = \|k_P(e)\| \leq \tau_\Gamma(P) \leq (\dim V_e)
    \|P\|_{1, \infty}\,.  
  \end{displaymath}
\end{prop}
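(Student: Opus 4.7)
The plan is to invoke Proposition~\ref{prop:traceclass} to write $P = S^*S$ with $S$ its positive square root, compute the kernel $k_P$ in terms of $k_S$, show that the operator--valued function $g \mapsto k_P(g)$ attains its norm at $g = e$, and finally use that $k_P(e)$ is a positive endomorphism of the finite--dimensional space $V_e$.

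First I would compose the kernels of $S^*$ and $S$ and apply a left--invariant substitution $w = x^{-1} z$ in $d\lambda$ to obtain
\begin{displaymath}
  k_P(g) = \int_\Gamma k_S(w)^* k_S(gw)\, d\lambda(w) \in \End(V_e),
\end{displaymath}
which at $g = e$ recovers the formula $k_P(e) = \int k_S(w)^* k_S(w)\, d\lambda(w)$ of Proposition~\ref{prop:traceclass}. Then Cauchy--Schwarz applied to
\begin{displaymath}
  \langle k_P(g) v, u\rangle = \int_\Gamma \langle k_S(gw) v, k_S(w) u\rangle\, d\lambda(w),
\end{displaymath}
combined with the left--invariance identity $\int \|k_S(gw) v\|^2 d\lambda(w) = \langle k_P(e) v, v\rangle$, yields
\begin{displaymath}
  |\langle k_P(g) v, u\rangle| \leq \langle k_P(e) v, v\rangle^{1/2} \langle k_P(e) u, u\rangle^{1/2} \leq \|k_P(e)\|\, \|v\|\, \|u\|.
\end{displaymath}
Taking the supremum over unit vectors gives $\sup_g \|k_P(g)\|_{\End(V_e)} = \|k_P(e)\|$.

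Next I would identify $\|P\|_{1,\infty}$ with this supremum. The trivial kernel bound $|(Pf)(x)| \leq \sup_g \|k_P(g)\| \cdot \|f\|_1$ gives one direction, and the reverse inequality $\|k_P(e)\| \leq \|P\|_{1,\infty}$ comes from testing $P$ against an approximate unit of the form $\lambda(U)^{-1} \chi_U \otimes v$ centred at $e$ with $v \in V_e$, using continuity of $k_P \in C_0(\Gamma)$ from Proposition~\ref{prop:traceclass} to pass $(Pf_U)(e) \to k_P(e) v$. Combined with the previous step this proves $\|P\|_{1,\infty} = \|k_P(e)\|$. The last chain of inequalities is then immediate: since $k_P(e)$ is a positive operator on the $n$--dimensional space $V_e$,
\begin{displaymath}
  \|k_P(e)\|_{\End(V_e)} \leq \tr_{V_e}(k_P(e)) \leq n\, \|k_P(e)\|_{\End(V_e)},
\end{displaymath}
and $\tau_\Gamma(P) = \tr_{V_e}(k_P(e))$ by Proposition~\ref{prop:traceclass}.

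The main point is the Cauchy--Schwarz step, which really says that $g \mapsto k_P(g)$ is of positive type and hence attains its operator--norm maximum at the identity; this relies only on left--invariance of Haar measure, so it works even when $\Gamma$ is non--unimodular. The rest is bookkeeping: the approximate--unit argument is standard once continuity of $k_P$ is in hand, and the trace/norm comparison on $V_e$ is elementary.
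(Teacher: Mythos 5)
Your proof is correct and arrives at the conclusion by the same essential mechanism as the paper: the key observation that the operator-valued kernel of a positive operator is dominated on the diagonal, whence $\|P\|_{1,\infty}=\sup_g\|k_P(g)\|=\|k_P(e)\|$, followed by the elementary comparison between the operator norm and trace of the positive matrix $k_P(e)\in\End(V_e)$. The only real difference is in how you obtain the diagonal dominance: you factor $P=S^*S$, compute the convolution formula $k_P(g)=\int k_S(w)^*k_S(gw)\,d\lambda(w)$, and apply Cauchy--Schwarz plus left-invariance of Haar measure, whereas the paper shortcuts this by invoking directly the two-point positivity inequality $2|\langle K_P(x,y)u,v\rangle|\leq\langle K_P(x,x)u,u\rangle+\langle K_P(y,y)v,v\rangle$ valid for any positive kernel. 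The two are of course equivalent (the paper's inequality is precisely Cauchy--Schwarz plus AM--GM for the positive form $K_P$), but your route has the advantage of making the continuity of $k_P$ and its $C_0$ membership visible, which you then use to justify the identification $\|P\|_{1,\infty}=\sup_g\|k_P(g)\|$ via an approximate-unit argument; the paper takes this identification as a standard fact. Your remark that the argument requires only left-invariance and so applies to non-unimodular $\Gamma$ is apt and consistent with the paper's scope, which only assumes unimodularity later when the trace property $\tau_\Gamma(P^*P)=\tau_\Gamma(PP^*)$ is needed.
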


\begin{proof}
  In general one has $\dsp \|P\|_{1, \infty} = \sup_{x,y}
  \|K_P(x,y)\|$, and by positivity of $P$,
  \begin{displaymath}
    2 |\langle K_P(x,y)u,
    v\rangle | \leq  \langle K_P(x,x) u, u \rangle + \langle
    K_P(y,y) v, v \rangle\,.
  \end{displaymath}
  Therefore $\dsp \|P\|_{1, \infty} = \sup_x\|K_P(x,x)\| = \|k_P(e)\|$
  for an invariant operator. Here
  $$
  \|k_P(e)\| = \sup_{\|v\|\leq 1 } \|k_P(e) v\|_{V_e} =
  \sup_{\|v\|\leq 1} \langle k_P(e) v,v \rangle
  $$ 
  for the positive $k_P(e)$, while $\tau_\Gamma(P) =
  \tr_{V_e}(k_P(e))$ by Proposition \ref{prop:traceclass}.
\end{proof}

As a consequence, already used in Proposition~\ref{prop:3:2}, the norm
$\|P\|_{1,\infty}$ is, up to multiplicative constants, a linear form
on positive $P$. This gives also the converse inequalities to
\eqref{eq:11} and \eqref{eq:12} in Proposition~\ref{prop:2.1} for
invariant operators on groups. Indeed it holds in this case that
\begin{equation}
  \label{eq:24}
  \begin{aligned}
    \|A^{-1} \Pi_\lambda\|_{1, \infty} & \asymp \tau_\Gamma(A^{-1}
    \Pi_\lambda) \asymp G(\lambda)\\
    \|A ^{-1} e^{-tA} \|_{1, \infty} & \asymp \tau_\Gamma (A^{-1}
    e^{-tA}) \asymp M(t)\,,
  \end{aligned}
\end{equation}
due to the equalities $\tau_\Gamma(A^{-1} e^{-tA}) = \int_t^{+\infty}
\tau_\Gamma(e^{-s A}) ds $ and
\begin{displaymath}
  \tau_\Gamma(A^{-1} \Pi_\lambda) =
  \int_0^\lambda u^{-1} d \tau_\Gamma(\Pi_u) = \lambda^{-1}
  \tau_\Gamma(\Pi_\Lambda) + \int_0^\lambda u^{-2} \tau_\Gamma(\Pi_u)
  du \,. 
\end{displaymath}

\medskip

Its relation to the $\Gamma$-trace allows to estimate the
ultracontractive spectral decay $F(\lambda)$ of $A$ in some simple
cases. Namely, following Dixmier \cite[\S 18.8]{Dixmier}, if the group
$\Gamma$ is locally compact unimodular and postliminaire, there exists
a Plancherel measure $\mu$ on its unitary dual $\widehat \Gamma$,
together with a Plancherel formula that gives here
\begin{equation}
  \label{eq:25}
  F(\lambda) = \|\Pi_\lambda\|_{1,\infty} \asymp
  \tau_\Gamma(\Pi_\lambda) = \int_{\widehat G} 
  \tr(\widehat{\Pi_\lambda}(\xi)) d \mu(\xi)\,.  
\end{equation}

For instance, in the case of the Laplacian $\Delta$ on $\R^n$, the
spectral space $E_\lambda(\Delta)$ is the Fourier transform of
functions supported in the ball $B(0, \sqrt\lambda)$ in
$(\widehat{\R^n}, d\mu) \simeq (\R^n, (2\pi)^{-n} dx)$, hence
\begin{displaymath}
  F(\lambda) = \mu(B(0, \sqrt\lambda)) = C_n \lambda^{n/2} ,
\end{displaymath}
with $C_n= (2\pi)^{-n} \mathrm{vol}(B_n)$. This leads to
\begin{displaymath}
  G(\lambda) =
  \frac{n C_n}{n-2} \lambda^{n/2 -1}\quad \mathrm{and}\quad H(x) = x
  G^{-1}(x) = \Bigl(\frac{n-2}{n C_n}\Bigr)^{\frac{2}{n-2}} x^{\frac{n}{n-2}} \,,
\end{displaymath}
so that finally (\ref{eq:3}) gives the classical Sobolev inequality in
$\R^n$
\begin{displaymath}
  \|f\|_{2n/(n-2)} \leq \frac{1}{\pi} \Bigl(
  \frac{n\,\mathrm{vol}(B_n)}{n-2} \Bigr)^{\frac{1}{n}} \|df\|_2 \,. 
\end{displaymath}
Yet we recall that the best constant here is $2(n(n-2))^{-1/2}
\mathrm{area}(S^n)^{-1/n}$, see \cite{Aubin}.

Still on $\R^n$, one can get some general algebraic expression of
$F(\lambda)$ for positive invariant differential operator $A = \sum_I
a_I \partial_{x_I}$. Let $\sigma(A)(\xi) = \sum_I a_I (i\xi)^I$ be its
polynomial symbol. Then again the spectral space $E_\lambda(A)$
consists in functions whose Fourier transform is supported in
\begin{displaymath}
  D_\lambda= \{\xi \in \R^n \mid \sigma(A)(\xi) \leq \lambda\}
\end{displaymath}
and
\begin{displaymath}
  F(\lambda) = (2\pi)^{-n} \mathrm{vol}(D_\lambda). 
\end{displaymath}
The asymptotic behaviour of $F(\lambda)$ when $\lambda \searrow 0$ can
be obtained from the resolution of the singularity of the polynomial
$\sigma(A)$ at $0$. Indeed, there exists $\alpha \in \mathbb{Q}^+$ and
$k \in [0,n-1]\cap \N$ such that
\begin{displaymath}
  F(\lambda) \underset{\lambda \rightarrow 0^+}{\sim} C \lambda^\alpha |\ln
  \lambda|^k  \,,
\end{displaymath}
see e.g. Theorem 7 in \cite[\S21.6]{Arnold}. Moreover, under a
non-degeneracy hypothesis on $\sigma(A)$, the exponents $\alpha$ and
$k$ can be read from its Newton polyhedra. Then if $\alpha > 1$,
Proposition~\ref{prop:3:1} yields that $G(\lambda)
\underset{0}{\asymp} \lambda^{\alpha-1} |\ln \lambda|^k$. Therefore
$G^{-1}(u) \underset{0}{\asymp} u^{1/(\alpha-1)} |\ln
u|^{-k/(\alpha-1)}$ and finally the $H$-Sobolev inequality
\eqref{eq:3} is governed in small energy by the function
\begin{displaymath}
  H(u) \asymp u^{\frac{\alpha}{\alpha-1}}
  |\ln(u)|^{-\frac{k}{\alpha-1}} \quad \mathrm{for }\quad u \ll 1\,. 
\end{displaymath}

\section{Spectral density and cohomology}
\label{sec:spectr-dens-cohom}

To apply the previous results, we suppose now that $K$ is a finite
simplicial complex and consider a covering $\Gamma \rightarrow X
\rightarrow K$. Let $d_k$ be the coboundary operator on $k$-cochains
$X^k$ of $X$. As a purely combinatorial and local operator, it acts
boundedly on all $\ell^p$-spaces of cochains $\ell^p X^k$, see
e.g. \cite{Bourdon-Pajot,Pansu1}.

Let $F_{\Gamma,k}(\lambda)$ denotes the $\Gamma$-trace of the spectral
projector $\Pi_\lambda = \chi(]0, \lambda])$ of $A= d_k^* d_k$. By
Proposition~\ref{prop:trace-norm} this function is equivalent, up to
multiplicative constants, to the hypercontractive spectral decay
$F(\lambda) = \|\Pi_\lambda\|_{1, \infty}$. Thus Theorem~\ref{thm:1.4}
is a direct application of Theorem~\ref{thm:1.1} in the polynomial
case. This statement compares two measurements of the torsion of
$\ell^2$-cohomology $T_2^{k+1} = \overline{ d_k(\ell^2)}^{\ell^2} /
d_k(\ell^2)$ that share some geometric invariance. We describe this
more precisely.

\medskip

We first recall the main invariance property of
$F_{\Gamma,k}(\lambda)$. We say that two increasing functions $f, g:
\R^+ \rightarrow \R^+$ are equivalent if there exists $C \geq 1$ such
that $f(\lambda/C) \leq g(\lambda) \leq f(C \lambda)$ for $\lambda$
small enough. According to
\cite{Efremov,Gromov-Shubin,Gromov-Shubin-erratum} we have :
\begin{thm}
  Let $K$ be a finite simplicial complex and $ \Gamma \rightarrow X
  \rightarrow K $ a covering. Then the equivalence class of
  $F_{\Gamma,k}$ only depends on $\Gamma$ and the homotopy class of
  the $(k+1)$-skeleton of $K$.
\end{thm}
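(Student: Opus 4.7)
The plan is to reduce the geometric statement to a dilatational invariance result for the spectral density of the Laplacian on $\Gamma$-equivariantly chain equivalent cochain complexes, following Efremov and Gromov--Shubin. Since $A=d_k^*d_k$ only involves $d_k:\ell^2X^k\to\ell^2X^{k+1}$, the function $F_{\Gamma,k}$ depends on $X$ only through its $(k+1)$-skeleton, so it is enough to compare $F_{\Gamma,k}$ with $F'_{\Gamma,k}$ whenever the $(k+1)$-skeletons of $K$ and $K'$ are $\Gamma$-equivariantly homotopy equivalent.

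First I would lift such a homotopy equivalence to the $\Gamma$-coverings and, by simplicial approximation on a common subdivision, extract bounded $\Gamma$-equivariant simplicial cochain maps $f:\ell^2X'^\bullet\to\ell^2X^\bullet$ and $g:\ell^2X^\bullet\to\ell^2X'^\bullet$ in degrees $\leq k+1$, together with bounded $\Gamma$-equivariant cochain homotopies $s,s'$ satisfying $gf-\id=ds+sd$ and $fg-\id=d's'+s'd'$; all norms are finite since $K$ and $K'$ are finite, with bounded simplicial valences. Setting $A=d_k^*d_k$, $A'=(d'_k)^*d'_k$, $V_\lambda=E_\lambda(A)\cap(\ker d_k)^\perp$ and similarly $V'_\lambda$, one has $F_{\Gamma,k}(\lambda)=\dim_\Gamma V_\lambda$, and the goal reduces to $F_{\Gamma,k}(\lambda)\leq F'_{\Gamma,k}(C\lambda)$ for some $C$ and small $\lambda$ (the reverse direction then follows by swapping $f$ and $g$).

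The key estimate runs as follows. For $\alpha\in V_\lambda$, the intertwining $d'_kf_k=f_{k+1}d_k$ gives $\|d'_kf_k\alpha\|\leq\|f\|\sqrt\lambda\|\alpha\|$. Letting $\pi'$ denote orthogonal projection onto $(\ker d'_k)^\perp$ and setting $\beta=\pi'f_k\alpha$, one has $d'_k\beta=d'_kf_k\alpha$, so $\langle A'\beta,\beta\rangle\leq\|f\|^2\lambda\|\alpha\|^2$. For a matching lower bound on $\|\beta\|$, I would apply the chain homotopy in degree $k$: $g_kf_k\alpha=\alpha+d_{k-1}s_k\alpha+s_{k+1}d_k\alpha$. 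The projection $\pi$ onto $(\ker d_k)^\perp$ kills $d_{k-1}s_k\alpha\in\im d_{k-1}\subset\ker d_k$. Writing $f_k\alpha=\beta+\gamma$ with $\gamma\in\ker d'_k$, the intertwining $d_kg_k=g_{k+1}d'_k$ yields $g_k\gamma\in\ker d_k$, so $\pi(g_k\beta)=\pi(g_kf_k\alpha)=\alpha+\pi(s_{k+1}d_k\alpha)$. Hence $\|\beta\|\geq c\|\alpha\|$ with $c=(1-\|s\|\sqrt\lambda)/\|g\|>0$ for small $\lambda$, and combining yields $\langle A'\beta,\beta\rangle\leq C\lambda\|\beta\|^2$ with $C=\|f\|^2/c^2$. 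In particular, the map $\alpha\mapsto\beta$ is bounded below, hence $\dim_\Gamma$-injective on $V_\lambda$.

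To conclude, I would invoke the standard von Neumann minmax characterization of $F'_{\Gamma,k}(C\lambda)$: the subspace $W:=\overline{\pi'f_k(V_\lambda)}$ is closed and $\Gamma$-invariant in $(\ker d'_k)^\perp$, the quadratic form estimate $\langle A'w,w\rangle\leq C\lambda\|w\|^2$ extends linearly to $\pi'f_k(V_\lambda)$ and then by lower semicontinuity of the form of $A'$ to $W$, so functional calculus places $W\subset V'_{C\lambda}$; by the injectivity $\dim_\Gamma W=\dim_\Gamma V_\lambda$, giving $F_{\Gamma,k}(\lambda)\leq F'_{\Gamma,k}(C\lambda)$ for small $\lambda$. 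The hard part is the quantitative lower bound $\|\beta\|\geq c\|\alpha\|$: without it, the smallness of $\langle A'\beta,\beta\rangle$ alone could not force $\beta$ into a spectral subspace of controlled $\Gamma$-dimension. The chain homotopy supplies this bound precisely in the regime $\lambda\ll\|s\|^{-2}$, which is why the theorem yields only equivalence of germs at $0^+$ and not a pointwise identity.
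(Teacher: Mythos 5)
The paper does not supply its own proof of this theorem: it is attributed to Efremov and Gromov--Shubin, and the text only records ``one tool in the proof,'' namely the existence of bounded $\Gamma$-invariant chain homotopies $f_k,g_k,h_k,h'_k$ between the $\ell^2$ Hilbert complexes. Your proposal is a correct reconstruction of that Gromov--Shubin argument built precisely on this tool: the comparison map $T=\pi' f_k|_{V_\lambda}$, the lower bound $\|T\alpha\|\geq c\|\alpha\|$ extracted from the identity $\pi(g_k\beta)=\alpha+\pi(s_{k+1}d_k\alpha)$ (after killing $\pi(d_{k-1}s_k\alpha)$ and $\pi(g_k\gamma)$), the resulting form bound $\langle A'T\alpha,T\alpha\rangle\leq C\lambda\|T\alpha\|^2$, and the $\dim_\Gamma$ comparison -- all match the standard proof, and the symmetry in $f\leftrightarrow g$ gives the two-sided dilatational equivalence.

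One step is misstated, though the conclusion survives. The phrase ``functional calculus places $W\subset V'_{C\lambda}$'' is false: a quadratic form bound $\langle A'w,w\rangle\leq C\lambda\|w\|^2$ on all $w\in W$ does not force $W$ into the spectral subspace $V'_{C\lambda}$ (a vector can have a small Rayleigh quotient while still having a nonzero high-frequency component). What the min-max characterization actually gives is that the spectral projection $\Pi'_{C\lambda}=\chi_{A'}(]0,C\lambda])$ restricted to $W$ is \emph{injective}: if $0\neq w\in W$ lay in $\chi_{A'}(]C\lambda,\infty[)$, then $\langle A'w,w\rangle>C\lambda\|w\|^2$, a contradiction. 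Injectivity plus the $\Gamma$-dimension invariance under closed injective equivariant maps yields $\dim_\Gamma W\leq\dim_\Gamma V'_{C\lambda}=F'_{\Gamma,k}(C\lambda)$ directly, without any containment. Since you already set up $W$ as closed, $\Gamma$-invariant and subject to the form bound, only this last deduction needs to be rephrased; the rest of the argument, including the crucial lower bound on $\|\beta\|$ that you rightly single out as the hard part, is sound.
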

One tool in the proof is the observation that an homotopy of finite
simplicial complexes $F$ and $G$ induces bounded $\Gamma$-invariant
homotopies between the Hilbert complexes $( \ell^2 X^k, d_k)$ and
$(\ell^2 Y^k, d'_k)$. That means there exist $\Gamma$-invariant
bounded maps
\begin{displaymath}
  f_k : \ell^2 X^k \rightarrow \ell^2 Y^k \quad
  \mathrm{and} \quad g_k : \ell^2  Y^k
  \rightarrow \ell^2  X^k
\end{displaymath}
such that
\begin{displaymath}
  f_{k+1} d_k = d'_kf_k \quad \mathrm{and}\quad g_{k+1} d'_k =
  d_k g_k
\end{displaymath}
and
\begin{displaymath}
  g_k f_k = \mathrm{Id} + d_{k-1}h_k + h_{k+1} d_k \quad \mathrm{and}
  \quad
  f_k g_k = \mathrm{Id} + d'_{k-1} h'_k + h'_{k+1} d'_k
\end{displaymath} 
for some bounded maps
\begin{displaymath}
  h_k : \ell^2  X^k \rightarrow \ell^2
  X^{k-1} \quad \mathrm{and} \quad h'_k : \ell^2 
  Y^k \rightarrow 
  \ell^2  Y^{k-1}. 
\end{displaymath}
All these maps are purely combinatorial and local, see
e.g. \cite{Bourdon-Pajot,Rumin05}, and thus extend on all $\ell^p$
spaces of cochains.

One can show a similar invariance property of the inclusion
\eqref{eq:9} we recall below, but that holds more generally on
uniformly \emph{locally finite simplicial complexes}, without
requiring a group invariance. These are simplicial complexes such that
each point lies in a bounded number $N(k)$ of $k$-simplexes.

\begin{prop}
  \label{prop:invariance-torsion}
  Let $X$ and $Y$ be uniformly locally finite simplicial
  complexes. Suppose that they are boundedly homotopic in $\ell^2$ and
  $\ell^p$ norms for some $p \geq 2$. Then one has
  \begin{displaymath}
    \overline{d_k (\ell^2 X^k)}^{\ell^2} \subset
    d_k(\ell^p X^k )\,, \leqno{(9)}
  \end{displaymath}
  if and only if a similar inclusion holds on $Y$.
\end{prop}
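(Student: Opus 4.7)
The plan is to chase the chain-homotopy identities in the spirit of a standard quasi-isomorphism argument, transporting the inclusion from one complex to the other via the maps $f_k, g_k, h'_k$, while using that all these combinatorial maps, being local, are bounded simultaneously on $\ell^2$ and $\ell^p$.

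By symmetry it suffices to show one direction. Assume the inclusion holds on $X$ and take $\beta \in \overline{d'_k(\ell^2 Y^k)}^{\ell^2}$. First I would observe that $d'_{k+1}\beta = 0$, since $d'_{k+1}d'_k = 0$ and the kernel of the bounded operator $d'_{k+1}$ on $\ell^2$ is closed. Next, apply $g_{k+1}$, which is bounded on $\ell^2$ and intertwines via $g_{k+1} d'_k = d_k g_k$. Writing $\beta = \lim_n d'_k \beta_n$ in $\ell^2$ with $\beta_n \in \ell^2 Y^k$, this gives
\begin{equation*}
g_{k+1}\beta = \lim_n d_k(g_k \beta_n) \in \overline{d_k(\ell^2 X^k)}^{\ell^2}.
\end{equation*}
By hypothesis on $X$, there exists $\alpha \in \ell^p X^k$ with $d_k \alpha = g_{k+1}\beta$.

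Now I would apply $f_{k+1}$ and use the chain-homotopy identity at level $k+1$,
\begin{equation*}
f_{k+1} g_{k+1} = \mathrm{Id} + d'_k h'_{k+1} + h'_{k+2} d'_{k+1}.
\end{equation*}
Since $d'_{k+1}\beta = 0$ and $f_{k+1} g_{k+1}\beta = f_{k+1} d_k \alpha = d'_k f_k \alpha$, we obtain
\begin{equation*}
\beta = d'_k f_k \alpha - d'_k h'_{k+1}\beta = d'_k\bigl(f_k \alpha - h'_{k+1}\beta\bigr).
\end{equation*}

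It remains to check that the right-hand side lives in $d'_k(\ell^p Y^k)$. Since $\alpha \in \ell^p X^k$ and $f_k$ is bounded $\ell^p X^k \to \ell^p Y^k$ by assumption, $f_k \alpha \in \ell^p Y^k$. For the second term, uniform local finiteness gives the inclusion $\ell^2 \subset \ell^p$ for $p \geq 2$ (the counting-measure case), so $\beta \in \ell^p Y^{k+1}$, and since $h'_{k+1}$ is bounded $\ell^p \to \ell^p$, we have $h'_{k+1}\beta \in \ell^p Y^k$. Hence $\beta \in d'_k(\ell^p Y^k)$, as required. The main (mild) obstacle is bookkeeping the $\ell^2$ versus $\ell^p$ regularity at each step; the algebraic core is simply the transport of cocycles across a chain homotopy, and everything else follows from the locality of the combinatorial maps.
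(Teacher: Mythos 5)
Your proof is correct and takes essentially the same route as the paper's: apply $g_{k+1}$ to transport the cocycle to $X$, solve there in $\ell^p$, transport back with $f_{k+1}$, and absorb the error via the chain-homotopy identity at level $k+1$, using $\ell^2\subset\ell^p$ for $p\geq 2$ to keep the correction term in $\ell^p$. The only cosmetic difference is that you observe $d'_{k+1}\beta=0$ via closedness of the kernel and apply the homotopy identity directly to the limit $\beta$, whereas the paper applies it to the approximating sequence $\alpha_n$ (for which $d'_{k+1}\alpha_n=0$ is immediate) and then passes to the $\ell^2$-limit; the content is identical.
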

\begin{proof}
  Suppose that $\overline{d_k (\ell^2 X^k)}^{\ell^2} \subset
  d_k(\ell^p X^k )$ and consider a sequence $\alpha_n = d'_k (\beta_n)
  \in d'_k(\ell^2 Y^k)$ that converges to $\alpha \in \overline{d_k
    (\ell^2 Y^k)}^{\ell^2}$ in $\ell^2$.

  Then $g_{k+1} \alpha_n = d_k(g_k \beta_n) \rightarrow g_{k+1} \alpha
  \in \overline{d_k (\ell^2 X^k)}^{\ell^2}$. Therefore there exists
  $\beta \in \ell^p X^k$ such that $g_{k+1} \alpha = d_k \beta $. Then
  taking $\ell^2$-limit in the sequence
  \begin{displaymath}
    f_{k+1} g_{k+1} \alpha_n = \alpha_n + d'_k h'_{k+1} \alpha_n + h'_{k+2}
    d'_{k+1} \alpha_n  = \alpha_n + d'_k h'_{k+1} \alpha_n
  \end{displaymath}
  gives
  \begin{displaymath}
    d'_k (f_k \beta) = f_{k+1} d_k \beta = \alpha + d'_k
    h'_{k+1}\alpha \,,
  \end{displaymath}
  and finally $\alpha \in d'_k(\ell^p Y^k)$ since $\ell^2 Y^k \subset
  \ell^p Y^k$ for $p \geq 2$.
\end{proof}

The inclusion \eqref{eq:9} we consider here is related to problems
studied in $\ell^{p,q}$ cohomology. We briefly recall this notion and
refer for instance to \cite{Goldshtein-Troyanov} for details. If $X$
is a simplicial complex as above, one considers the spaces
\begin{equation*}
  Z_q^k (X) = \ker d_k \cap
  \ell^q X^k \quad \mathrm{and} \quad B_{p,q}^k(X) =
  d_{k-1}(\ell^p X^k) \cap \ell^q X^k \,.
\end{equation*}
Then the $\ell^{p,q}$-cohomology of $X$ is defined by
\begin{displaymath}
  H_{p,q}^k (X) = Z_q^k (X) /
  B_{p,q}^k(X)\,. 
\end{displaymath}
Its reduced part is the Banach space
\begin{displaymath}
  \overline H_{p,q}^k (X) =
  Z_q^k (X) / \overline B_{p,q}^k(X) \,,
\end{displaymath} 
while its torsion part
\begin{displaymath}
  T_{p,q}^k(X) = \overline B_{p,q}^k(X)/
  B_{p,q}^k(X) \,
\end{displaymath}
is not a Banach space. These spaces fit into the exact sequence
\begin{displaymath}
  0 \rightarrow T_{p,q}^k(X) \rightarrow H_{p,q}^k
  (X)  \rightarrow \overline H_{p,q}^k (X)
  \rightarrow 0 \,.
\end{displaymath}
It is straightforward to check as above that, for $p\geq q$, these
spaces satisfy the same homotopical invariance property as in
Proposition~\ref{prop:invariance-torsion}.

\begin{prop}
  \label{prop:invariance-cohomology}
  Let $X$ and $Y$ be uniformly locally finite simplicial
  complexes. Suppose that they are boundedly homotopic in $\ell^p$ and
  $\ell^q$ norms for $p\geq q$. Then the maps $f_k: \ell^* X^k
  \rightarrow \ell^* Y^k$ and $g_k : \ell^* Y^k \rightarrow \ell^*
  X^k$ induce reciprocal isomorphisms between the $\ell^{p,q}$
  cohomologies of $X$ and $Y$, as well as their reduced and torsion
  components.
\end{prop}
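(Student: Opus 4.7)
The plan is to transfer the argument of Proposition~\ref{prop:invariance-torsion} level by level to each term of the short exact sequence $0 \to T_{p,q}^k \to H_{p,q}^k \to \overline H_{p,q}^k \to 0$. The input is that $f_k$, $g_k$, $h_k$, $h'_k$ are bounded chain/homotopy maps \emph{simultaneously} in both $\ell^p$ and $\ell^q$, so they automatically pass to sums, intersections and closures of $\ell^p$- and $\ell^q$-subspaces.

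First I would verify that $f_k$ and $g_k$ induce continuous maps on each of the three cohomology groups. From $d'_k f_k = f_{k+1} d_k$ and $\ell^q$-boundedness, $f_k$ sends $Z_q^k(X)$ into $Z_q^k(Y)$; combined with $\ell^p$-boundedness and $f_{k-1}$ being a chain map, it sends $B_{p,q}^k(X) = d_{k-1}(\ell^p X^{k-1}) \cap \ell^q X^k$ into $B_{p,q}^k(Y)$; and by $\ell^q$-continuity it sends the $\ell^q$-closure $\overline B_{p,q}^k(X)$ into $\overline B_{p,q}^k(Y)$. Hence $f_k$ descends to maps between the corresponding $H_{p,q}^k$, $\overline H_{p,q}^k$ and $T_{p,q}^k$, and these maps are compatible with the short exact sequence. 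The analogous statement holds for $g_k$.

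Next I would apply the chain homotopy $g_k f_k - \mathrm{Id} = d_{k-1} h_k + h_{k+1} d_k$ to a representative $\alpha \in Z_q^k(X)$. The last term vanishes, giving $g_k f_k \alpha - \alpha = d_{k-1}(h_k \alpha)$. Here $h_k \alpha \in \ell^q X^{k-1}$ by the $\ell^q$-bound on $h_k$, and since the simplices carry counting measure the elementary inclusion $\ell^q \subset \ell^p$ (valid because $p \geq q$) places $h_k \alpha$ in $\ell^p X^{k-1}$ as well; moreover $d_{k-1}(h_k \alpha)$ lies in $\ell^q X^k$ because it equals $g_k f_k \alpha - \alpha$. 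Therefore $g_k f_k \alpha - \alpha \in B_{p,q}^k(X)$, so the induced map $g_k f_k$ is the identity on $H_{p,q}^k(X)$. The symmetric identity gives $f_k g_k \equiv \mathrm{Id}$ on $H_{p,q}^k(Y)$. Restricting this very same computation to $\alpha \in \overline B_{p,q}^k(X)$ handles the torsion component, and the quotient map then handles $\overline H_{p,q}^k$.

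I do not expect any real obstacle. The one point that must be watched is the inclusion $\ell^q X^{k-1} \subset \ell^p X^{k-1}$ used to read $d_{k-1}(h_k \alpha)$ as the coboundary of an $\ell^p$-cochain; this is precisely where the hypothesis $p \geq q$ enters and cannot be dropped. Everything else is formal bookkeeping: boundedness of the four maps in the relevant $\ell^r$ norm is already assumed, and the three cohomology spaces sit inside a diagram of bounded linear (or quotient) maps to which the chain-homotopy identity directly applies.
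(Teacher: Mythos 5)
Your proof is correct, and it is precisely the spelled-out version of what the paper dismisses as "straightforward to check as above" by reference to Proposition~\ref{prop:invariance-torsion}: the chain maps and homotopies act simultaneously in $\ell^p$ and $\ell^q$, and the counting-measure inclusion $\ell^q \subset \ell^p$ (valid exactly because $p \geq q$) is the sole point where the hypothesis enters, letting $h_k\alpha$ be read as an $\ell^p$ primitive so that $g_kf_k\alpha - \alpha = d_{k-1}(h_k\alpha) \in B_{p,q}^k(X)$. Your bookkeeping for the reduced and torsion components via $\ell^q$-continuity of $f_k,g_k$ on $\overline{B}_{p,q}^k$ is also exactly what is needed, so this matches the paper's intended argument.
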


In this setting, the vanishing of the $\ell^{p,2}$-torsion
$T_{p,2}^{k+ 1}(X)$ is equivalent to the closeness of
$B^{k+1}_{p,2}(X) = d_k(\ell^p X^k) \cap \ell^2 X^{k+1}$ in $\ell^2
X^{k+1}$, i.e to the inclusion
\begin{displaymath}
  \overline{d_k(\ell^p X^k) \cap \ell^2 X^{k+1}}^{\ell^2} \subset
  d_k(\ell^p X^k) \cap \ell^2 X^{k+1}  \,.
\end{displaymath}
This implies the weaker inclusion \eqref{eq:9}, but is stronger in
general unless the following holds
\begin{equation}
  \label{eq:26}
  d_k(\ell^p X^k) \cap \ell^2 X^{k+1} \subset \overline{d_k(\ell^2
    X^k)}^{\ell^2} .
\end{equation}
Now by Hodge decomposition in $\ell^2 X^{k+1}$, one has always
\begin{displaymath}
  d_k(\ell^p X^k) \cap \ell^2 X^{k+1} \subset \ker d_{k+1} \cap
  \ell^2 X^{k+1} = \overline{H}_{2}^{k+1}(X) \oplus^\bot \overline{d_k(\ell^2
    X^k)}^{\ell^2} .
\end{displaymath}
Hence \eqref{eq:26} holds if the reduced $\ell^2$-cohomology
$\overline{H}_{2}^{k+1}(X)$ vanishes, proving in that case the
equivalence of \eqref{eq:9} to the vanishing of the
$\ell^{p,2}$-torsion, and even to the identity
\begin{equation}
  \label{eq:27}
  B_{p,2}^{k+1} : = d_k (\ell^p X^k) \cap \ell^2 X^{k+1} = \overline{d_k(\ell^2
    X^k)}^{\ell^2} , 
\end{equation}
which is clearly closed in $\ell^2$.
\begin{cor}
  \label{cor:lp2}
  Let $K$ be a finite simplicial space and $\Gamma \rightarrow X
  \rightarrow K$ a covering. Suppose that the spectral distribution
  $F_{\Gamma,k}$ of $A=d_k^* d_k$ on $(\ker d_k)^\bot$ satisfies
  $F_{\Gamma,k}(\lambda) \leq C \lambda^{\alpha/2}$ for some $\alpha >
  2$. Suppose moreover that the reduced $\ell^2$-cohomology
  $\overline{H}_{2}^{k+1}(X) $ vanishes.

  Then \eqref{eq:27} and the vanishing of the $\ell^{p,2}$-torsion
  $T_{p,2}^{k+1}(X)$ hold for $1/p \leq 1/2 - 1/\alpha$.
\end{cor}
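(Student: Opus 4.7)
The plan is to assemble ingredients already prepared in the paper: Theorem~\ref{thm:1.4} supplies the analytic input, and the discussion immediately preceding the corollary supplies the bridge from the inclusion \eqref{eq:9} to the vanishing of torsion once $\overline{H}_2^{k+1}(X) = 0$. The argument will proceed in three short steps, and no substantial new idea beyond what is already in place will be needed.

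First, I would apply Theorem~\ref{thm:1.4} directly to $A = d_k^* d_k$ acting on $(\ker d_k)^\bot$. The hypothesis $F_{\Gamma,k}(\lambda) \leq C \lambda^{\alpha/2}$ with $\alpha > 2$ is exactly what that theorem requires, and for any $p$ satisfying $1/p \leq 1/2 - 1/\alpha$ it yields both the Sobolev inequality \eqref{eq:10} and the inclusion \eqref{eq:9}:
\[
\overline{d_k(\ell^2 X^k)}^{\ell^2} \subset d_k(\ell^p X^k).
\]

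Next, to upgrade \eqref{eq:9} to the equality \eqref{eq:27}, I would establish the reverse inclusion \eqref{eq:26}. This is where the vanishing of the reduced $\ell^2$-cohomology enters: by Hodge decomposition in $\ell^2 X^{k+1}$ one has
\[
d_k(\ell^p X^k) \cap \ell^2 X^{k+1} \subset \ker d_{k+1} \cap \ell^2 X^{k+1} = \overline{H}_2^{k+1}(X) \oplus^\bot \overline{d_k(\ell^2 X^k)}^{\ell^2},
\]
and the assumption $\overline{H}_2^{k+1}(X) = 0$ collapses the right-hand side to $\overline{d_k(\ell^2 X^k)}^{\ell^2}$, giving \eqref{eq:26}. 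Combined with \eqref{eq:9} this produces \eqref{eq:27}, which in particular exhibits $B_{p,2}^{k+1} = d_k(\ell^p X^k) \cap \ell^2 X^{k+1}$ as a closed subspace of $\ell^2 X^{k+1}$.

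Finally, the vanishing of $T_{p,2}^{k+1}(X) = \overline{B_{p,2}^{k+1}}^{\ell^2}/B_{p,2}^{k+1}$ follows immediately, since the closure in $\ell^2$ coincides with the space itself. There is no real obstacle: the analytic heart of the statement has been concentrated in Theorem~\ref{thm:1.4}, and the remaining passage is a direct consequence of the $\ell^2$ Hodge splitting. The only detail that would merit a quick check is that the closure in the definition of $T_{p,2}^{k+1}(X)$ is taken in $\ell^2 X^{k+1}$, which is consistent with the conventions of Section~\ref{sec:spectr-dens-cohom}.
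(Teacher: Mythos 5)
Your proof is correct and follows essentially the same route as the paper: invoke Theorem~\ref{thm:1.4} to obtain the inclusion \eqref{eq:9}, then use the $\ell^2$ Hodge decomposition together with the vanishing of $\overline{H}_2^{k+1}(X)$ to get \eqref{eq:26}, which combined with \eqref{eq:9} yields the equality \eqref{eq:27} and hence the closedness of $B_{p,2}^{k+1}$ in $\ell^2$. This is precisely the chain of reasoning laid out in the discussion preceding the corollary in Section~\ref{sec:spectr-dens-cohom}, so there is nothing to add.
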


For instance, by \cite{Cheeger-Gromov}, infinite amenable groups have
vanishing reduced $\ell^2$-cohomology in all degrees.

\bibliographystyle{abbrv}


\def\cprime{$'$}

-----------------------------------------------------

\end{document}